\newcommand{\expect}[1]{\mathbb{E}\big\{#1\big\}}
\newcommand{\bv}[1]{{\boldsymbol{#1} }}
\newcommand{\script}[1]{{{\cal{#1} }}}
\begin{document}
\title{Max-Weight Achieves the Exact $[O(1/V), O(V)]$ Utility-Delay Tradeoff Under Markov Dynamics}
%\title{LIFO Achieves Near Optimal Utility-Delay Tradeoff}
%\title{Lagrange Multiplier in Stochastic Network Optimization: the Place Holder}
%\title{Delay Reduction in Stochastic Network Optimization}
\author{\large{Longbo Huang, Michael J. Neely}%
\thanks{Longbo Huang (web:  http://www-scf.usc.edu/$\sim$longbohu)
and Michael J. Neely (web:  http://www-rcf.usc.edu/$\sim$mjneely)
are with the Department of Electrical
Engineering, University of Southern California, Los Angeles, CA 90089, USA.}%
%\thanks{===}
\thanks{This material is supported in part  by one or more of 
the following: the DARPA IT-MANET program
grant W911NF-07-0028, the NSF grant OCE 0520324, 
the NSF Career grant CCF-0747525.} }
%\markboth{Draft}{Huang}
\maketitle

\newtheorem{rem}{Remark}
\newtheorem{fact_def}{\textbf{Fact}}
\newtheorem{coro}{\textbf{Corollary}}
\newtheorem{lemma}{\textbf{Lemma}}
\newtheorem{main}{\textbf{Proposition}}
\newtheorem{thm}{\textbf{Theorem}}
\newtheorem{claim}{\emph{Claim}}
\newtheorem{prop}{Proposition}
\newtheorem{assumption}{\textbf{Assumption}}

 \begin{abstract}
In this paper, we show that the Quadratic Lyapunov function based Algorithm (QLA, also  known as MaxWeight or Backpressure) achieves an exact $[O(1/V), O(V)]$ utility-delay tradeoff in stochastic network optimization problems with Markovian network dynamics. Note that though the QLA algorithm has been extensively studied, most of the performance results are obtained under i.i.d. network radnomness, and it has not been formally proven that QLA achieves the exact $[O(1/V), O(V)]$ utility-delay tradeoff under Markov dynamics. 
Our analysis uses a combination of duality theory and a variable multi-slot Lyapunov drift argument. The variable multi-slot Lapunov  drift argument here is different from previous multi-slot drift analysis, in that the slot number is a random variable corresponding to the renewal time of the network randomness. This variable multi-slot drift argument not only allows us to obtain an exact $[O(1/V), O(V)]$ tradeoff, but also allows us to state the performance of QLA in terms of explicit parameters of the network dynamic process. 
 \end{abstract}

\begin{keywords}
Queueing, Dynamic Control, Lyapunov analysis,  Stochastic Optimization
\end{keywords}

\section{Introduction}
%- state that we do not need to know or learn anything , such as in super-fast of FQLA.
In this paper, we show that the Quadratic Lyapunov function based Algorithm (QLA, also known as the MaxWeight algorithm) \cite{neelynowbook} achieves an exact $[O(1/V), O(V)]$ utility-delay tradeoff in the following general stochastic network optimization problem.  We are given a discrete time stochastic network. The network state, which describes  the network randomness, such as the network channel condition or the random arrivals, is time varying according to some Markov process. A network controller performs some action based on the observed network state at every time slot. The chosen action incurs a cost, \footnote{Since cost minimization is mathematically equivalent to utility maximization, below we will use cost and utility interchangeably} but also serves some amount of traffic and possibly generates new traffic for the network.  This traffic causes congestion, and thus leads to backlogs at nodes in the network. The goal of the controller is to minimize its time average cost subject to the constraint that the time average total backlog in the network is finite. 

This is a very general framework and includes a wide class of networking problems, ranging from flow utility maximization \cite{eryilmaz_qbsc_ton07}, energy minimization \cite{neelyenergy}, network pricing \cite{huangneelypricing} to cognitive radio applications \cite{rahulneelycognitive} etc. Also,  many techniques have also been applied to this problem (see \cite{yichiang_netopt08} for a survey).  
Among the many techniques that have been adopted, the family of Quadratic Lyapunov function based Algorithms (QLA) \cite{neelynowbook} are recently receiving much attention, due to their provable performance guarantees, robustness to stochastic network conditions, and most importantly, their ability to achieve the desired performance \emph{without requiring any statistical knowledge} of the underlying randomness in the network. When the network state is i.i.d., it has been proven in \cite{neelynowbook} that QLA can achieve a utility that is within $O(1/V)$ of  the optimal utility for any $V\geq1$ for general network optimization problems, while guaranteeing an $O(V)$ network delay. Two works \cite{neelysuperfast} \cite{neelyenergydelay} construct algorithms to achieve an $\big[O(1/V), O(\log(V))\big]$ utility-delay tradeoff using exponential Lyapunov functions. The recent work \cite{huangneely_dr_wiopt09} also develops the Fast-QLA  (FQLA) algorithm based on quadratic Lyapunov functions to achieve an $\big[O(1/V), O([\log(V)]^2)\big]$ tradeoff. 

When the network state is Markovian, it has been shown that when the network backlogs are deterministically bounded, QLA can also achieve utilities within $O(\log(V)/V)$ to the optimal values \cite{huangneelypricing} \cite{rahulneelycognitive}, while guaranteeing that the average delay is $O(V)$. Without such deterministic queueing bounds, it has recently been shown that QLA achieves an $[O(\epsilon+\frac{T_{\epsilon}}{V}), O(V)]$ tradeoff under Markovian network states \cite{neely_queuestability10}, where $\epsilon>0$ and $T_{\epsilon}$ represent the proximity to the optimal value and the ``convergence time'' of the QLA algorithm to that proximity, respectively. However, there has not been any proof showing that QLA achieves the exact $[O(1/V), O(V)]$ utility-delay tradeoff under Markovian network dynamics. 

%algorithm that has been shown to be able to achieve the . %Moreover, no algorithm has yet been shown to ensure a delay that is logarithmic in $V$ when pushing the utility performance to within $O(1/V)$ of the optimal in this Markovian case. 

In this paper, we present the first proof of the exact $[O(1/V), O(V)]$ tradeoff of the QLA algorithm under Markovian network dynamics. To establish the result, we use a combination of duality theory and a variable multi-slot Lyapunov drift argument. Different from previous multi-slot drift arguements, e.g.,\cite{neelynowbook}, where the drift is usually computed over a fixed number of slots, the slot number here is a random variable corresponding to the renewal time of the network dynamic process. 
This $[O(1/V), O(V)]$ tradeoff result contributes to a better understanding of the QLA algorithm performance and enables more precise resource allocation  decisions in network optimization problems. The result can also be combined with the recent result developed in \cite{huangneely_dr_wiopt09} to show that the FQLA algorithm achieves an $\big[O(1/V), O([\log(V)]^2)\big]$ tradeoff for stochastic network optimization problems with Markovian network dynamics, and is thus the  first known algorithm that can ensure a poly-logarithmic delay performance when pushing the utility performance to within $O(1/V)$ of the optimal in this Markovian case. %In this paper, we also prove that the optimal utility value is exactly characterized by a \emph{deterministic problem}. 

This paper is organized as follows. In Section \ref{section:notation}, we set up our  notations. We then present our system model in Section \ref{section:model}. We review the QLA algorithm in Section \ref{section:qlareview}. The performance results of QLA under the Markovian network dynamics are obtained in Section \ref{section:qla-performance}. 

%The delay performance of QLA-LIFO is presented in Section \ref{section:qla-lifo-analysis}. Simulation results are presented in Section \ref{section:simulation}.

%$\vspace{-.3in}$
\section{Notations}\label{section:notation}
Here we specify our notations. $\mathbb{R}$ represents the set of real numbers. $\mathbb{R}_+$ (or $\mathbb{R}_-$) represents the set of nonnegative (or non-positive) real numbers. $\mathbb{R}^n$ (or $\mathbb{R}^n_+$) represents the set of $n$ dimensional \emph{column} vectors, with each element being in $\mathbb{R}$ (or $\mathbb{R}_+$).  \textbf{bold} symbols $\bv{a}$ and $\bv{a}^T$ represent \emph{column} vector and its transpose. $\bv{a}\succeq\bv{b}$ indicates that vector $\bv{a}$ is entrywise no less than vector $\bv{b}$. $||\bv{a}-\bv{b}||$ is the Euclidean distance of $\bv{a}$ and $\bv{b}$. $\bv{0}$ is the column vector with all elements being $0$.

$\vspace{-.2in}$
\section{System Model}\label{section:model}
In this section, we specify the general network model we use. We consider a network controller that operates a network with the goal of minimizing the time average cost, subject to the queue stability constraint. The network is assumed to operate in slotted time, i.e., $t\in\{0,1,2,...\}$. We assume there are $r\geq1$ queues in the network. 

$\vspace{-.22in}$
\subsection{Network State}
In every slot $t$, we use $S(t)$ to denote the current network state, which indicates the current network parameters, such as a vector of channel conditions for each link, or a collection of other relevant information about the current network channels and arrivals. 
We assume that $S(t)$ evolves according to a general irreducible and aperiodic Markov chain with countably many states and denote its state space by $\script{S} = \{s_1, s_2, s_3, \ldots\}$. We assume $S(t)$ has a well defined steady state distribution, and let $\pi_{s_i}$ denote its steady state probability of being in state $s_i$. Note that in this case, by Theorem 3 in Chapter 5 of \cite{gallager_stochastic}, the existence of a steady state distribution $\bv{\pi}$ implies that all the states are positive recurrent, hence $\pi_{s_i}>0$ for all $i$. 

%This network state model contains the cases when the network state space is i.i.d. with countably many states and the case when the network 

% and that:
%\begin{eqnarray}
%\pi_i=\frac{1}{}
%\end{eqnarray}

%We assume that $S(t)$ evolves according a finite state irreducible and aperiodic Markov chain, with a total of $M$ different random network states denoted as $\script{S} = \{s_1, s_2, \ldots, s_M\}$. We let $\pi_{s_i}$ denote its steady state probability of being in state $s_i$. We assume without loss of generality that $\pi_{s_i}>0$ for all $s_i$. The network controller can observe $S(t)$ at the beginning of every slot $t$, but the $\pi_{s_i}$ and transition probabilities are not necessarily known. 

$\vspace{-.22in}$
\subsection{The Cost, Traffic, and Service}\label{subsection:costtrafficservice}
At each time $t$, after observing $S(t)=s_i$, the controller chooses an action $x(t)$ from a set $\script{X}^{(s_i)}$, i.e., $x(t)= x^{(s_i)}$ for some $x^{(s_i)}\in\script{X}^{(s_i)}$. The set $\script{X}^{(s_i)}$ is called the feasible action set for network state $s_i$ and is assumed to be time-invariant and compact for all $s_i\in\script{S}$.  The cost, traffic, and service generated by the chosen action $x(t)=x^{(s_i)}$ are as follows:
\begin{enumerate}
\item[(a)] The chosen action has an associated cost given by the cost function $f(t)=f(s_i, x^{(s_i)}): \script{X}^{(s_i)}\mapsto \mathbb{R}_+$ (or $\script{X}^{(s_i)}\mapsto\mathbb{R}_-$ in reward maximization problems);

\item[(b)] The amount of traffic generated by the action to queue $j$ is determined by the traffic function $A_j(t)=A_{j}(s_i, x^{(s_i)}): \script{X}^{(s_i)}\mapsto \mathbb{R}_{+}$, in units of packets; 

\item[(c)] The amount of service allocated to queue $j$ is given by the rate function $\mu_j(t)=\mu_{j}(s_i, x^{(s_i)}): \script{X}^{(s_i)}\mapsto \mathbb{R}_{+}$, in units of packets;

 \end{enumerate}
Note that $A_j(t)$ includes both the exogenous arrivals from outside the network to queue $j$, and the endogenous arrivals from other queues, i.e., the transmitted packets from other queues, to queue $j$. We assume the functions $f(s_i, \cdot)$, $\mu_{j}(s_i, \cdot)$ and $A_{j}(s_i, \cdot)$ are time-invariant, their magnitudes are uniformly upper bounded by some constant $\delta_{max}\in(0,\infty)$ for all $s_i$, $j$, and they are known to the network operator. We also assume that there exists a set of actions $\{x^{(s_i)}_k\}_{i=1, 2, ...}^{k=1,2, ..., r+2}$ with $x^{(s_i)}_k\in\script{X}^{(s_i)}$ for all $s_i$, and a set of variables $\{\vartheta^{(s_i)}_k\}_{i=1, 2, ...}^{k=1,2, ..., r+2}$ with $\sum_k\vartheta^{(s_i)}_k=1$  and $\vartheta^{(s_i)}_k\geq0$ for all $s_i$ and $k$ such that:  
\begin{eqnarray}
\sum_{s_i}\pi_{s_i}\big\{\sum_k\vartheta^{(s_i)}_k[A_{j}(s_i, x^{(s_i)}_k)-\mu_{j}(s_i, x^{(s_i)}_k)]\big\}\leq -\eta,\label{eq:slackness}
\end{eqnarray}
for some $\eta>0$ for all $j$. That is, the queue stability 
constraints are feasible with $\eta$-slackness. Thus, there exists a stationary randomized policy that stabilizes all queues (where $\vartheta^{(s_i)}_k$ represents the probability of choosing action $x^{(s_i)}_k$ when $S(t)=s_i$).  In the following, we use $\bv{A}(t)=(A_1(t), A_2(t), ..., A_r(t))^{T}$ and $\bv{\mu}(t)=(\mu_1(t), \mu_2(t), ..., \mu_r(t))^{T}$ to denote the arrival and service vectors at time $t$. It is easy to see from above that if we define:
\begin{eqnarray}
B=\sqrt{r}\delta_{max},\label{eq:Bdef}
\end{eqnarray}
then $\|\bv{A}(t)-\bv{\mu}(t)\|\leq B$ for all $t$. 

$\vspace{-.25in}$
\subsection{Queueing, Average Cost, and the Stochastic Problem}\label{section:queuenotation}
Let $\bv{q}(t)=(q_1(t), ..., q_r(t))^T\in\mathbb{R}^r_{+}$, $t=0, 1, 2, ...$ be the queue backlog vector  process of the network, in units of packets. We assume the following queueing dynamics: %\footnote{Note that (\ref{eq:queuedynamic}) is slightly different from $U_j(t+1)=\max[U_j(t)-\mu_j(t), 0]+A_j(t)$ used in previous QLA algorithms. But it can easily be shown that all the results in this paper hold for the latter queueing dynamic as well.}
\begin{eqnarray}
q_j(t+1)=\max\big[q_j(t)-\mu_j(t), 0\big]+A_j(t)\quad\forall j,\label{eq:queuedynamic}
\end{eqnarray}
and $\bv{q}(0)=\bv{0}$. By using (\ref{eq:queuedynamic}), we assume that when a queue does not have enough packets to send, null packets are transmitted.  In this paper, we adopt the following notion of queue stability:
\begin{eqnarray}
\expect{\sum_{j=1}^rq_j}\triangleq
\limsup_{t\rightarrow\infty}\frac{1}{t}\sum_{\tau=0}^{t-1}\sum_{j=1}^{r}\expect{q_j(\tau)}<\infty.\label{eq:queuestable}
\end{eqnarray}
We also use $f^{\Pi}_{av}$ to denote the time average cost induced by an action-choosing policy $\Pi$, defined as:
\begin{eqnarray}
f^{\Pi}_{av}\triangleq
\limsup_{t\rightarrow\infty}\frac{1}{t}\sum_{\tau=0}^{t-1}\expect{f^{\Pi}(\tau)},\label{eq:timeavcost}
\end{eqnarray}
where $f^{\Pi}_{av}(\tau)$ is the cost incurred at time $\tau$ by policy $\Pi$. We call an action-choosing  policy \emph{feasible} if at every time slot $t$, it only chooses actions from the feasible action set $\script{X}^{(S(t))}$.  We then call a feasible action-choosing  policy under which (\ref{eq:queuestable}) holds a \emph{stable} policy, and use $f_{av}^*$ to denote the optimal time average cost over all stable policies. 
%We call an action-seeking  policy under which (\ref{eq:queuestable}) holds a \emph{stable} policy, and use $f_{av}^*$ to denote the optimal time average cost over all stable policies. 
%$\vspace{-.3in}$
%\subsection{The Stochastic  Problem}
In every slot, the network controller observes the current network state $S(t)$ and chooses a control action, with the goal of minimizing time average cost subject to network stability. This goal can be mathematically stated as:
%\begin{eqnarray*}
\textbf{(P1)}\,\,\, $\bv{\min: \,  f^{\pi}_{av}, \,\, s.t.\,  (\ref{eq:queuestable})}$. 
%\end{eqnarray*}
In the rest of the paper, we will refer to problem (\textbf{P1}) as \emph{the stochastic problem}. %This stochastic problem framework can be used to model many network utility problems, such as the energy minimization problem \cite{neelyenergy} and the access point pricing problem \cite{huangneelypricing}.
%We note that a similar network model with stochastic penalties is treated in \cite{stolyar06primaldual} using a fluid model and a primal-dual approach that achieves optimality in a limiting sense. The framework is also treated in \cite{neelynowbook} using a quadratic Lyapunov based algorithm (QLA) that provides an explicit $[O(1/V), O(V)]$ performance-delay tradeoff when the network state is i.i.d..

%\vspace{-.2in}
\section{QLA and the Deterministic Problem}\label{section:qlareview}
In this section, we first review the quadratic Lyapunov function based algorithms (the QLA algorithm) \cite{neelynowbook} for solving the stochastic problem. Then we define the \emph{deterministic problem} and its dual problem. We then also discuss some properties of the dual function. The dual problem and the properties of the dual function will be used later for analyzing the performance of QLA. 

%We then describe the ordinary subgradient method (OSM) that can be used to solve the dual.  The dual problem and OSM will also be used later for our analysis of the steady state backlog behavior under QLA. 

%Then we establish the connection between QLA and the randomized incremental subgradient method (RISM) \cite{Nedic_incrsub01} applied to a dual problem. This connection will allow us to study the steady state behavior of $\bv{q}(t)$ using properties of subgradient methods. 
%\vspace{-0.1in}
%\subsection{The QLA algorithm and the Deterministic Problem}
We first recall the QLA algorithm \cite{neelynowbook} as follows. 

\underline{\emph{QLA:}} Initialize the parameter $V\geq1$. At every time slot $t$, observe the current network state $S(t)$ and the backlog $\bv{q}(t)$. If $S(t)=s_i$, choose $x^{(s_i)}\in\script{X}^{(s_i)}$ that solves the following: 
\begin{eqnarray}
\hspace{-.3in}\max: && -Vf(s_i, x)+\sum_{j=1}^{r}q_j(t)\big[\mu_j(s_i, x)-A_j(s_i, x)\big]\label{eq:QLAeq}\\
s.t. && x\in\script{X}^{(s_i)}.\nonumber
\end{eqnarray}
Depending on the problem structure, (\ref{eq:QLAeq}) can usually be decomposed into separate parts that are easier to solve, e.g., \cite{neelyenergy}, \cite{huangneelypricing}. 
%It has been shown in \cite{neelynowbook} that if the network state $S(t)$ is i.i.d., then QLA achieves the following:
Also, when the network state process $S(t)$ is i.i.d., it has been shown in \cite{neelynowbook} that, 
\begin{eqnarray}
f_{av}^{QLA}=f^*_{av}+O(1/V),\quad \overline{q}^{QLA}=O(V),\label{eq:qla_performance}
\end{eqnarray}
where $f_{av}^{QLA}$ and $\overline{q}^{QLA}$ are the expected average cost and the expected time average network backlog size under QLA, respectively. 
When $S(t)$ is Markovian, it has been shown in, e.g., \cite{huangneelypricing} and \cite{rahulneelycognitive} that QLA achieves an $[O(\log(V)/V), O(V)]$ utility-delay tradeoff if the queue sizes are deterministically upper bounded by $\Theta(V)$ for all time. Without this deterministic backlog bound, it has recently been shown that QLA achieves an $[O(\epsilon+\frac{T_{\epsilon}}{V}), O(V)]$ tradeoff under Markov $S(t)$ processes, where $\epsilon$ and $T_{\epsilon}$ represent the proximity to the optimal value and the ``convergence time'' of the QLA algorithm for this proximity \cite{neely_queuestability10}. 
However, this latter tradeoff is less explicit, and it is common that when $S(t)$ is Markovian, $T_{\epsilon}=\Omega(\log(\frac{1}{\epsilon}))$, in which case we again have an $[O(\frac{\log(V)}{V}), O(V)]$ tradeoff when $\epsilon=1/V$. 

We also recall the \emph{the deterministic problem} defined in \cite{huangneely_dr_wiopt09}:
\begin{eqnarray}
\min:&&\script{F}(\bv{x})\triangleq V\sum_{s_i}\pi_{s_i}f(s_i, x^{(s_i)})\label{eq:primal}\\
s.t.&&\script{A}_j(\bv{x})\triangleq\sum_{s_i}\pi_{s_i}A_j(s_i, x^{(s_i)})\nonumber\\
&&\qquad\qquad\quad\leq \script{B}_j(\bv{x})\triangleq\sum_{s_i}\pi_{s_i}\mu_j(s_i, x^{(s_i)})\quad\forall\, j\nonumber\\
&& x^{(s_i)}\in \script{X}^{(s_i)}\quad \forall\, i=1, 2, ...  \nonumber
\end{eqnarray}
where $\pi_{s_i}$ corresponds to the steady state probability of $S(t)=s_i$ and $\bv{x}=(x^{(s_1)}, ..., x^{(s_M)})^T$. The dual problem of (\ref{eq:primal}) can be obtained as follows:
\begin{eqnarray}
\max:\,\,\, g(\bv{\gamma}),\quad s.t.\,\,\, \bv{\gamma}\succeq\bv{0},\label{eq:dualproblem}
\end{eqnarray}
where $g(\bv{\gamma})$ is called the dual function and is defined as:
\begin{eqnarray}
g(\bv{\gamma})=\inf_{x^{(s_i)}\in \script{X}^{(s_i)}}\sum_{s_i}\pi_{s_i}\bigg\{Vf(s_i, x^{(s_i)})\label{eq:dual_separable}\qquad\qquad\qquad\\
+\sum_j\gamma_j\big[A_j(s_i, x^{(s_i)})- \mu_j(s_i, x^{(s_i)})\big]\bigg\}.\nonumber
\end{eqnarray}

Here $\bv{\gamma}=(\gamma_1, ..., \gamma_r)^T$ is the \emph{Lagrange multiplier} of (\ref{eq:primal}). It is well known that $g(\bv{\gamma})$ in (\ref{eq:dual_separable}) is concave in the vector $\bv{\gamma}$, and hence the problem (\ref{eq:dualproblem}) can usually be solved efficiently, particularly when cost functions and rate functions are separable over different network components. It is also well known that in many situations, the optimal value of (\ref{eq:dualproblem}) is the same as the optimal value of (\ref{eq:primal}) and in this case we say that there is no duality gap \cite{bertsekasoptbook}. However, despite the fact that the problem (\ref{eq:primal}) may be non-convex, in which case the duality gap is usually nonzero, our first result shows that the dual problem (\ref{eq:dualproblem}) gives the exact  value of $Vf_{av}^*$, where $f_{av}^*$ is the optimal time average cost for the stochastic problem. Below, $\bv{\gamma}_V^*=(\gamma^*_{V1}, \gamma^*_{V2}, ..., \gamma^*_{Vr})^T$ denotes an optimal solution of the dual problem (\ref{eq:dualproblem}) with the corresponding $V$ parameter. 

\begin{thm}\label{thm:primaldualequal}
Let $\bv{\gamma}_V^*$ be an optimal solution of the dual problem (\ref{eq:dualproblem}). We  have:
\begin{eqnarray}
g(\bv{\gamma}^*_V)=Vf^*_{av}.
\end{eqnarray}
\end{thm}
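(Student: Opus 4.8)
I would prove the identity by establishing the two inequalities $g(\bv{\gamma}^*_V)\leq Vf^*_{av}$ and $g(\bv{\gamma}^*_V)\geq Vf^*_{av}$ separately. It is convenient to define, for each state $s_i$ and each $\bv{\gamma}\succeq\bv{0}$, the state-wise minimized Lagrangian
\[
h_{\bv{\gamma}}(s_i)\;\triangleq\;\inf_{x\in\script{X}^{(s_i)}}\Big\{Vf(s_i,x)+\sum_{j=1}^{r}\gamma_j\big[A_j(s_i,x)-\mu_j(s_i,x)\big]\Big\},
\]
so that, since the bracketed expression in (\ref{eq:dual_separable}) separates across states while the constraints $x^{(s_i)}\in\script{X}^{(s_i)}$ are decoupled, $g(\bv{\gamma})=\sum_{s_i}\pi_{s_i}h_{\bv{\gamma}}(s_i)$. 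The uniform bounds $|f|,|A_j|,|\mu_j|\leq\delta_{max}$ give $|h_{\bv{\gamma}}(s_i)|\leq V\delta_{max}+2\delta_{max}\sum_{j}\gamma_j$ for all $i$, so this series is absolutely convergent.

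For the converse bound $g(\bv{\gamma}^*_V)\leq Vf^*_{av}$, I would fix an arbitrary stable policy $\Pi$ and an arbitrary $\bv{\gamma}\succeq\bv{0}$. At every time $t$ the chosen action lies in $\script{X}^{(S(t))}$, so $Vf(t)+\sum_j\gamma_j[A_j(t)-\mu_j(t)]\geq h_{\bv{\gamma}}(S(t))$; averaging over $t\in\{0,\dots,T-1\}$ and taking expectations yields
\[
\frac{V}{T}\sum_{t=0}^{T-1}\expect{f(t)}\;\geq\;\frac{1}{T}\sum_{t=0}^{T-1}\expect{h_{\bv{\gamma}}(S(t))}\;-\;\sum_{j=1}^{r}\gamma_j\,\frac{1}{T}\sum_{t=0}^{T-1}\expect{A_j(t)-\mu_j(t)}.
\]
I would then send $T\to\infty$: because $S(t)$ approaches its steady-state distribution $\bv{\pi}$ and $h_{\bv{\gamma}}$ is bounded, the first term on the right converges to $\sum_{s_i}\pi_{s_i}h_{\bv{\gamma}}(s_i)=g(\bv{\gamma})$ (dominated convergence controlling the countable sum); telescoping $q_j(t+1)\geq q_j(t)+A_j(t)-\mu_j(t)$ from $\bv{q}(0)=\bv{0}$ gives $\frac{1}{T}\expect{q_j(T)}\geq\frac{1}{T}\sum_{t}\expect{A_j(t)-\mu_j(t)}$, and stability (\ref{eq:queuestable}) forces $\liminf_{T}\frac{1}{T}\expect{q_j(T)}=0$, so along a common subsequence $T_k\to\infty$ the subtracted term is asymptotically nonnegative for every $j$. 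Passing to that subsequence gives $Vf^{\Pi}_{av}\geq g(\bv{\gamma})$; taking the infimum over stable $\Pi$ and setting $\bv{\gamma}=\bv{\gamma}^*_V$ concludes this direction.

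For the achievability bound $g(\bv{\gamma}^*_V)\geq Vf^*_{av}$, I would pass to the convexification of the deterministic problem: allow, in each state $s_i$, a probability distribution over $\script{X}^{(s_i)}$ (a stationary randomized decision). The induced average cost and average net arrival into each queue $j$ are then affine in the randomization, so ``minimize average cost subject to average net arrival $\leq0$ in every queue'' is a \emph{convex} program, and since for fixed $\bv{\gamma}$ its Lagrangian is affine in the randomization, minimizing over distributions on $\script{X}^{(s_i)}$ returns the same value as minimizing over $\script{X}^{(s_i)}$; hence its Lagrangian dual function is exactly $g(\bv{\gamma})$. The $\eta$-slackness assumption (\ref{eq:slackness}) supplies a strictly feasible point, so Slater's condition holds and there is no duality gap: the optimal value of the convexified problem equals $g(\bv{\gamma}^*_V)$. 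Letting $\hat{\bv{x}}$ attain (or come arbitrarily close to) that value, I would, for $\epsilon\in(0,1)$, run the stationary randomized policy that uses $\hat{\bv{x}}$ with probability $1-\epsilon$ and the slack policy $\{\vartheta^{(s_i)}_k\}$ with probability $\epsilon$. Its expected net arrival rate into every queue is at most $-\epsilon\eta<0$, which (via a multi-slot, or renewal-based, Lyapunov drift estimate on $\sum_j q_j(t)^2$ whose window is long enough for $S(t)$ to equilibrate, so that the multi-slot drift is genuinely negative) gives $\expect{\sum_j q_j}=O(1/(\epsilon\eta))<\infty$, i.e.\ (\ref{eq:queuestable}) holds, while its time-average cost is at most $(1-\epsilon)g(\bv{\gamma}^*_V)/V+\epsilon\delta_{max}$ (up to a vanishing term if $\hat{\bv{x}}$ is only near-optimal). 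Letting $\epsilon\downarrow0$ gives $Vf^*_{av}\leq g(\bv{\gamma}^*_V)$, which together with the converse proves the theorem.

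I expect the converse to be essentially bookkeeping once one is careful with the $\limsup$/$\liminf$ subsequences and the exchange of the limit with the countable sum over $\script{S}$. The real work is the achievability step, in two places: (i) making rigorous that the Lagrangian dual of the possibly non-convex deterministic problem (\ref{eq:primal}) coincides with that of its convexification, and that the $\eta$-slackness of (\ref{eq:slackness}) is exactly the Slater condition giving a zero gap, all with countably many states and merely compact action sets; and (ii) verifying that the constructed mixed stationary randomized policy truly satisfies the strong-stability requirement (\ref{eq:queuestable}) under \emph{Markovian} (not i.i.d.) $S(t)$ --- since the one-slot Lyapunov drift is negative only in a $\bv{\pi}$-averaged sense, this needs a multi-slot drift bounded through the chain's return/mixing behavior, which is precisely the variable multi-slot device the paper advertises.
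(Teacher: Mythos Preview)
Your proposal is correct and, for the achievability direction $g(\bv{\gamma}^*_V)\geq Vf^*_{av}$, follows essentially the paper's route: convexify the deterministic problem, observe that the dual function is unchanged, obtain strong duality from the $\eta$-slackness (the paper carries this out by an explicit separating-hyperplane argument in the $(r{+}1)$-dimensional utility--constraint image space, which is precisely the proof of Slater's condition specialized to this setting and avoids any infinite-dimensional issues), and then run the $\epsilon$-mixed policy to convert the convexified optimum into a stable policy with cost approaching $g(\bv{\gamma}^*_V)/V$. Your converse direction, however, differs from the paper. The paper introduces the intermediate quantity $OPT_c$ (the optimal value of the convexified problem) and proves $Vf^*_{av}\geq OPT_c$ by a Caratheodory argument placing the limiting utility--constraint vector of any stable policy in the convex hull $\overline{\script{J}}$, then $OPT_c=g(\bv{\gamma}^*_V)$ via strong duality. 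You instead bound $Vf^{\Pi}_{av}\geq g(\bv{\gamma})$ directly, using ergodic averaging of $h_{\bv{\gamma}}(S(t))$ together with the fact that stability~(\ref{eq:queuestable}) forces $\liminf_T \tfrac{1}{T}\expect{q_j(T)}=0$ along a common subsequence. Your route is more elementary for this half (no Caratheodory, no detour through $OPT_c$); the paper's route has the advantage of unifying both inequalities through a single finite-dimensional object $OPT_c$, so that the delicate step---no duality gap---is done once in $\mathbb{R}^{r+1}$ rather than by invoking Slater in a space of randomized policies.
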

\begin{proof}
See Appendix A. 
\end{proof}

The following corollary is immediate and will be useful for our following analysis.
\begin{coro}\label{coro:primaldualequal}
For any $\bv{\gamma}\succeq\bv{0}$, we have: 
\begin{eqnarray}
g(\bv{\gamma})\leq Vf^*_{av}.
\end{eqnarray}
\end{coro}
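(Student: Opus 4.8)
The plan is to obtain this as an immediate consequence of Theorem~\ref{thm:primaldualequal} together with the definition of the dual problem (\ref{eq:dualproblem}). By construction, $\bv{\gamma}_V^*$ is an optimal solution of (\ref{eq:dualproblem}), i.e., it attains the maximum of the dual function $g(\cdot)$ over the feasible set $\{\bv{\gamma} : \bv{\gamma}\succeq\bv{0}\}$. Hence, for every $\bv{\gamma}\succeq\bv{0}$ we have the elementary optimality inequality $g(\bv{\gamma})\leq g(\bv{\gamma}_V^*)$.

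The next step is simply to substitute the value of $g(\bv{\gamma}_V^*)$ supplied by Theorem~\ref{thm:primaldualequal}, namely $g(\bv{\gamma}_V^*)=Vf^*_{av}$. Chaining the two relations gives $g(\bv{\gamma})\leq g(\bv{\gamma}_V^*)=Vf^*_{av}$ for all $\bv{\gamma}\succeq\bv{0}$, which is precisely the claimed bound. There is no genuine obstacle here: all the substance is already contained in Theorem~\ref{thm:primaldualequal}, and the corollary merely records that a dual-optimal multiplier upper-bounds $g$ over the entire dual-feasible region.

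For completeness one could also argue directly, without invoking $\bv{\gamma}_V^*$: fix $\bv{\gamma}\succeq\bv{0}$ and bound the infimum in (\ref{eq:dual_separable}) by evaluating its argument at (a randomization of) the actions of a near-optimal stationary randomized policy for (\textbf{P1}); since the objective is linear in $(f,\bv{A},\bv{\mu})$ for fixed $\bv{\gamma}$, the infimum over $\script{X}^{(s_i)}$ is not increased by allowing such randomizations, and the penalty term $\sum_j\gamma_j[\script{A}_j(\bv{x})-\script{B}_j(\bv{x})]$ is nonpositive by the feasibility/slackness structure (\ref{eq:slackness}), leaving $g(\bv{\gamma})\leq V$ times a cost that can be driven arbitrarily close to $f^*_{av}$. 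Given that Theorem~\ref{thm:primaldualequal} is already established, however, the one-line deduction above is all that is needed.
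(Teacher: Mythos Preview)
Your argument is correct and matches the paper's approach: the paper states the corollary as ``immediate'' from Theorem~\ref{thm:primaldualequal}, and the one-line deduction $g(\bv{\gamma})\leq g(\bv{\gamma}_V^*)=Vf^*_{av}$ via dual optimality is exactly what is intended. The alternative direct argument you sketch is unnecessary here but also valid.
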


%Note that an immediate consequence of Theorem \ref{thm:primaldualequal} is that $g(\bv{\gamma})\leq Vf^*_{av}$ for all $\bv{\gamma}\succeq\bv{0}$. This will be a useful fact in our following analysis. 

%For a given $\bv{\gamma}$, let $\bv{x}_{\bv{\gamma}}=(x_{\bv{\gamma}}^{(s_1)}, x_{\bv{\gamma}}^{(s_2)}, ..., x_{\bv{\gamma}}^{(s_M)})^T$ with $x_{\bv{\gamma}}^{(s_i)}\in\script{X}^{(s_i)}, \forall\, i$, be a minimizer of the right-hand side of $g(\bv{\gamma})$. The term $\bv{G}_{\bv{\gamma}}=(G_{\bv{\gamma},1}, G_{\bv{\gamma},2}, ..., G_{\bv{\gamma},r})^{T}$  with: 
%\begin{eqnarray}
%G_{\bv{\gamma}(t),j}&=&\script{A}_j(\bv{x}_{\bv{\gamma}(t)})-\script{B}_j(\bv{x}_{\bv{\gamma}(t)})\label{eq:subgradient_def}\\
%&=&\sum_{s_i}\pi_{s_i}\big[-\mu_j(s_i, x_{\bv{\gamma}(t)}^{(s_i)})+ A_j(s_i, x_{\bv{\gamma}(t)}^{(s_i)})\big], \nonumber
%\end{eqnarray}
%is called the \emph{subgradient} of $g(\bv{\gamma})$ at $\bv{\gamma}$ \cite{bertsekasoptbook}. It is well known that for any other $\hat{\bv{\gamma}}\in\mathbb{R}^r$, we have:
%\begin{eqnarray}
%(\hat{\bv{\gamma}}-\bv{\gamma})^{T}\bv{G}_{\bv{\gamma}}\geq g(\hat{\bv{\gamma}})-g(\bv{\gamma}).\label{eq:subgradientpro}
%\end{eqnarray}
%Using $\|\bv{G}_{\bv{\gamma}}\|\leq B$, we note that (\ref{eq:subgradientpro}) also implies:
%\begin{eqnarray}
%g(\hat{\bv{\gamma}})-g(\bv{\gamma})\leq B\|\hat{\bv{\gamma}}-\bv{\gamma}\|\quad\forall\,\hat{\bv{\gamma}}, \bv{\gamma}\in\mathbb{R}^r\label{eq:dualbddslope}
%\end{eqnarray}
In the following, we also define the functions $g_{s_i}(\bv{\gamma})$ for each $s_i=s_1, s_2, ...$ as follows:
\begin{eqnarray}
g_{s_i}(\bv{\gamma})=\inf_{x^{(s_i)}\in \script{X}^{(s_i)}} \bigg\{Vf(s_i, x^{(s_i)})\label{eq:dual_separable_si}\qquad\qquad\qquad\qquad\\
+\sum_j\gamma_j\big[A_j(s_i, x^{(s_i)})- \mu_j(s_i, x^{(s_i)})\big]\bigg\}.\nonumber
\end{eqnarray}
That is, the $g_{s_i}(\cdot)$ function is the dual function of (\ref{eq:primal}) when the network has only one single network state $s_i$, i.e., the network condition is deterministically described by $s_i$. It is easy to see from (\ref{eq:dual_separable}) and (\ref{eq:dual_separable_si}) that:
\begin{eqnarray}
g(\bv{\gamma})=\sum_{s_i}\pi_{s_i}g_{s_i}(\bv{\gamma}). \label{eq:dualfunction_sumofsi}
\end{eqnarray} %==============here
Also, the term $\bv{G}^{(s_i)}_{\bv{\gamma}}=(G^{(s_i)}_{\bv{\gamma},1}, G^{(s_i)}_{\bv{\gamma},2}, ..., G^{(s_i)}_{\bv{\gamma},r})^{T}$ with: 
\begin{eqnarray}
G^{(s_i)}_{\bv{\gamma},j}
=\big[-\mu_j(s_i, x_{\bv{\gamma}}^{(s_i)})+ A_j(s_i, x_{\bv{\gamma}}^{(s_i)})\big],\label{eq:subgradient_def_si}
\end{eqnarray}
is called the subgradient of the $g_{s_i}(\cdot)$ function at the point $\bv{\gamma}$ \cite{bertsekasoptbook}.  It is known that for any other $\hat{\bv{\gamma}}\in\mathbb{R}^r$, we have:
\begin{eqnarray}
(\hat{\bv{\gamma}}-\bv{\gamma})^{T}\bv{G}^{(s_i)}_{\bv{\gamma}}\geq g_{s_i}(\hat{\bv{\gamma}})-g_{s_i}(\bv{\gamma}).\label{eq:subgradientpro}
\end{eqnarray}
Using the fact that $\|\bv{G}^{(s_i)}_{\bv{\gamma}}\|\leq B$, (\ref{eq:subgradientpro}) also implies:
\begin{eqnarray}
g_{s_i}(\hat{\bv{\gamma}})-g_{s_i}(\bv{\gamma})\leq B\|\hat{\bv{\gamma}}-\bv{\gamma}\|.\label{eq:dualbddslope_si}
\end{eqnarray}

\section{Performance of QLA under Markovian Dynamics}\label{section:qla-performance}
In this section, we prove that under the Markovian network state dynamics, QLA achieves an exact $[O(\frac{1}{V}), O(V)]$ utility-delay tradeoff for the stochastic problem. This is the first formal proof of this result. It generalizes the $[O(\frac{1}{V}), O(V)]$ performance result of QLA in the i.i.d. case in \cite{neelynowbook}. 
%Note that though the QLA algorithms has been extensively studied, its performance result has primarily been established under the i.i.d. network state case \cite{neelynowbook} by using a single slot Lyapunov drift analysis. Under the Markovian case, an $[O(\frac{\log(V)}{V}), O(V)]$ tradeoff has been established either with a delayed drift analysis \cite{rahulneelycognitive} or via a fixed multi-slot drift analysis  \cite{neely_queuestability10}. 
%has be established by using a delayed drift analysis when the backlogs have deterministic upper bounds \cite{huangneelypricing}, \cite{rahulneelycognitive}, and 
To prove the result, we use a variable multi-slot Lyapunov drift argument. Different from previous multi-slot drift arguments, e.g., \cite{neelypowerjsac} and \cite{neely_queuestability10}, where the drift is usually computed over a fixed number of slots, the slot number here is a random variable corresponding to the return time of the network states. As we will see, this variable multi-slot drift analysis allows us to obtain the exact $[O(\frac{1}{V}), O(V)]$ utility-delay tradeoff for QLA. Moreover, it also allows us to state QLA's performance in terms of explicit parameters of the Markovian $S(t)$ process. 

In the following, we define $T_{i}(t_0)$ to be the first return time of $S(t)$ to state $s_i$ given that $S(t_0)=s_i$, i.e., 
\begin{eqnarray*}
T_{i}(t_0)=\inf\{T>0, \,s.t.\, S(t_0+T)=s_i\, |\, S(t_0)=s_i\}.
\end{eqnarray*} 
%Since $S(t)$ is Markovian, we see that if we consider a sequence of return times $\{T_{i}(t_k)\}_{k=0}^{\infty}$, where $t_k>t_{k-1}$ and $S(t_k)=s_i$, then $\{T_{ii}(t_k)\}_{k=0}^{\infty}$ are i.i.d. distributed. Hence they all have the same expected value and second moment. 
We see that $T_{i}(t_0)$ has the same distribution for all $t_0$. Thus, we will use $\overline{T_{i}}$ to denote the expected value of $T_{i}(t)$ for any $t$ s.t. $S(t)=s_i$ and use $\overline{T^2_{i}}$ to denote its second moment. 
%Note that when $j=i$, $T_{ii}(t)$ is indeed the first return time from state $s_i$ back to $s_i$, given $S(t)=s_i$. In this case, we will simplify the notation and use $T_i(t)$ to denote $T_{ii}(t)$. We also use $\overline{T_{i}}$ and $\overline{T^2_{i}}$ to denote its expected value and second moment. 
%In the following, we define $T_{ii}(t_0)$ to be the first passage time of $S(t)$ to state $s_i$ given that $S(t_0)=s_i$, i.e., 
%\begin{eqnarray*}
%T_{ii}(t_0)=\inf\{T>0, \,s.t.\, S(t_0+T)=s_i\, |\, S(t_0)=s_j\}.
%\end{eqnarray*} 
%Since $S(t)$ is Markovian, we see that if we consider a sequence of return times $\{T_{ji}(t_k)\}_{k=0}^{\infty}$, where $t_k>t_{k-1}$ and $S(t_k)=s_j$, then $\{T_{ji}(t_k)\}_{k=0}^{\infty}$ are i.i.d. distributed. Hence they all have the same expected value and second moment. 
%In the following, we will use $\overline{T_{ji}}$ to denote the expected value of $T_{ji}(t)$ for any $t$ s.t. $S(t)=s_j$ and use $\overline{T^2_{ji}}$ to denote its second moment. 
%Note that when $j=i$, $T_{ii}(t)$ is indeed the first return time from state $s_i$ back to $s_i$, given $S(t)=s_i$. In this case, we will simplify the notation and use $T_i(t)$ to denote $T_{ii}(t)$. We also use $\overline{T_{i}}$ and $\overline{T^2_{i}}$ to denote its expected value and second moment. 
By Theorem 3 in Chapter 5 of \cite{gallager_stochastic}, we have for all states $s_i$ that:
\begin{eqnarray}
\overline{T_{i}} =\frac{1}{\pi_{s_i}}<\infty, \label{eq:exp-time-prob}
\end{eqnarray}
i.e., the expected return time of any state $s_i$ is finite. 
%We use $\overline{T_i^2}$ to denote the second moment of $T_i(t)$. We also use $T_{ji}$ to denote the time needed to go from state $s_i$ to state $s_j$, and use $\overline{T_{ji}}$ and  It is easy to see that $\overline{T_{ji}}<\infty$. 
In the following, we also use $T_{ji}(t_0)$ to denote the first hitting time for $S(t)$ to enter the state $s_i$ given that $S(t_0)=s_j$. It is again easy to see that $T_{ji}(t_0)$ has the same distribution at all $t_0$. Hence we similarly use $\overline{T_{ji}}$ and $\overline{T_{ji}^2}$ to denote its first and second moments. 
Throughout the paper, we make the following assumption:
\begin{assumption}\label{assumption:finitereturn}
There exists a state $s_1$ such that: \[\overline{T_{j1}^2}<\infty,\quad\forall\,\, j.\] 
\end{assumption}
That is, starting from any state $s_j$ (including $s_1$), the random time needed to get into state $s_1$ has a finite second moment. 
This condition is not very restrictive and can be satisfied in many cases, e.g., when $\script{S}$ is finite. 
%Note that in our case, we have both $\overline{T_i}, \overline{T_i^2}<\infty$. $T_{max}=\max_{i}\overline{T_{ii}}$, and 
%For our analysis, it is convenient to define the following notations for the state $s_1$:

%\begin{eqnarray}
%T_{max}=\max_{i}\overline{T_i},\,\, T_{min}=\min_i\overline{T_i},\,\, \mathbb{T}_{max}=\max_{i}\overline{T^2_i}.\label{eq:max_returnmoment_def}
%\end{eqnarray}
% It is easy to see that  $C$ and $D$ are both nonnegative and finite. 

We now have the following theorem summarizing QLA's performance under the Markovian network state dynamics:
\begin{thm}\label{thm:generalq}
Suppose (\ref{eq:slackness}) holds. Then 
under the Markovian network state process $S(t)$, the QLA algorithm achieves the following:
\begin{eqnarray}
\hspace{-.3in}f^{QLA}_{av}&\leq& f^*_{av}+\frac{CB^2}{V\overline{T_1}},\label{eq:qla_utility}\\
\hspace{-.3in}\overline{\sum_{j=1}^rq_j} &\leq& \frac{CB^2 + \overline{T_1}V\delta_{max}}{ \eta} +\frac{DB^2}{2}, \label{eq:qla_backlog}
\end{eqnarray}
where $\eta>0$ is the slack parameter defined in (\ref{eq:slackness}) in Section \ref{subsection:costtrafficservice}, and $C, D$ are defined as:  
\begin{eqnarray}
C=\overline{T^2_1}+\overline{T_1},\,\,
D=\overline{T^2_1}-\overline{T_1},\label{eq:max_returnmoment_def}
\end{eqnarray}
i.e., $C$ and $D$ are the sum and difference of the first and second moments of the return time associated with $s_1$.
%an $[O(\frac{1}{V}), O(V)]$ utility-delay tradeoff. 
\end{thm}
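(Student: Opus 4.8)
The plan is to run a Lyapunov drift argument over a \emph{random} block of slots, namely between successive returns of $S(t)$ to the distinguished state $s_1$. Define the standard Lyapunov function $L(\bv{q}(t)) = \frac{1}{2}\sum_{j=1}^r q_j(t)^2$. Rather than computing the one-slot drift $\expect{L(\bv{q}(t+1)) - L(\bv{q}(t))\,|\,\bv{q}(t)}$, I would fix a time $t_0$ with $S(t_0)=s_1$, let $T = T_1(t_0)$ be the return time to $s_1$, and analyze the multi-slot drift $\expect{L(\bv{q}(t_0+T)) - L(\bv{q}(t_0))\,|\,\bv{q}(t_0), S(t_0)=s_1}$ together with the accumulated weighted cost $\sum_{\tau=t_0}^{t_0+T-1} V f^{QLA}(\tau)$ over the block. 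Squaring the queue dynamics (\ref{eq:queuedynamic}) and summing over the $T$ slots of the block, using $\|\bv{A}(\tau)-\bv{\mu}(\tau)\|\le B$ from (\ref{eq:Bdef}), gives a bound of the form
\begin{eqnarray*}
L(\bv{q}(t_0+T)) - L(\bv{q}(t_0)) \le \frac{1}{2}\sum_{\tau=t_0}^{t_0+T-1}\Big[ B^2 + 2\sum_j q_j(\tau)(A_j(\tau)-\mu_j(\tau))\Big] + (\text{cross terms in } T),
\end{eqnarray*}
where the cross terms arise because queue backlogs can drift by up to $B$ per slot within the block, so $q_j(\tau)$ differs from $q_j(t_0)$ by at most $B(\tau-t_0)$; summing these contributes a term controlled by $\overline{T_1^2}$, which is exactly where the constants $C$ and $D$ in (\ref{eq:max_returnmoment_def}) come from. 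Adding $V\sum_{\tau}f^{QLA}(\tau)$ and invoking the fact that QLA chooses $x(\tau)$ to maximize (\ref{eq:QLAeq}) slot-by-slot, I can replace the QLA action's per-slot contribution by that of \emph{any} alternative feasible policy; the natural comparison policy is the stationary randomized policy $\{\vartheta^{(s_i)}_k\}$ from (\ref{eq:slackness}), evaluated along the sample path of $S(\tau)$ over the block.

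The key technical device is that, conditioned on $S(t_0)=s_1$, the block of states $(S(t_0),\ldots,S(t_0+T-1))$ is a renewal cycle, so by the renewal-reward / Wald-type identity, $\expect{\sum_{\tau=t_0}^{t_0+T-1} h(S(\tau))} = \overline{T_1}\sum_{s_i}\pi_{s_i} h(s_i)$ for any function $h$. Applying this with $h(s_i)$ equal to the cost $f(s_i,\cdot)$ and the drift term $A_j(s_i,\cdot)-\mu_j(s_i,\cdot)$ of the comparison stationary policy lets me express the block-averaged comparison cost as $\overline{T_1}$ times the deterministic-problem objective, and by Theorem \ref{thm:primaldualequal} / Corollary \ref{coro:primaldualequal} this is $\overline{T_1}\,f^*_{av}$ for the cost and $\le -\overline{T_1}\eta$ for the drift (using the $\eta$-slackness). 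This produces a clean block drift-plus-penalty inequality of the shape
\begin{eqnarray*}
\expect{L(\bv{q}(t_0+T)) - L(\bv{q}(t_0))} + V\expect{\textstyle\sum_{\tau}f^{QLA}(\tau)} \le C B^2/2 + \overline{T_1} V f^*_{av} - (\text{a negative drift term}),
\end{eqnarray*}
or an alternate version with the $-\eta$ slack term retained instead of the cost comparison, which yields the backlog bound.

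Finally I would chain these block inequalities across successive returns to $s_1$. Because $q_j(0)=0$ and $L\ge 0$, summing the telescoping drift over the first $N$ renewal blocks, dividing by the total elapsed time (which by the strong law / renewal theorem is $\approx N\overline{T_1}$), and taking $N\to\infty$ gives $f^{QLA}_{av}\le f^*_{av} + CB^2/(V\overline{T_1})$, which is (\ref{eq:qla_utility}). For (\ref{eq:qla_backlog}), I use the version of the block inequality that keeps the $-\overline{T_1}\eta\,\expect{\sum_j q_j(t_0)}$ term (again needing to bound how much $q_j(\tau)$ within the block deviates from $q_j(t_0)$, costing another $O(\overline{T_1^2})$ term, hence the $DB^2/2$), rearrange to bound the time-average backlog, and let $V\delta_{max}$ account for the worst-case cost excursion over a block. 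The main obstacle I anticipate is handling the within-block queue fluctuations rigorously: the subgradient/dual machinery is naturally stated in terms of $\bv{q}(t_0)$ as a Lagrange multiplier, but QLA actually uses the time-varying $\bv{q}(\tau)$, so I must carefully bound $|q_j(\tau)-q_j(t_0)|\le B(\tau-t_0)$ and control the resulting error sums, which requires the second moment $\overline{T_1^2}$ to be finite — precisely Assumption \ref{assumption:finitereturn}. A secondary subtlety is justifying the renewal-reward exchange of expectation and the infinite block sum, and confirming that the $\limsup$ time-average in (\ref{eq:queuestable})–(\ref{eq:timeavcost}) is correctly recovered from the per-block bounds even though block lengths are random.
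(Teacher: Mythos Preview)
Your overall architecture matches the paper's proof closely: variable-length Lyapunov drift over renewal cycles of $S(t)$ to the state $s_1$, control of the within-block queue fluctuations $|q_j(\tau)-q_j(t_0)|\le B(\tau-t_0)$ (which is exactly what produces the $\overline{T_1^2}$ terms and hence $C,D$), the renewal-reward identity $\expect{\sum_{\tau=t_0}^{t_0+T-1}h(S(\tau))}=\overline{T_1}\sum_{s_i}\pi_{s_i}h(s_i)$, and telescoping over blocks. The backlog argument you sketch---compare to the slackness policy $\{\vartheta_k^{(s_i)}\}$ to extract the $-\eta\sum_j q_j(t_0)$ term, then interpolate within blocks to pick up the extra $DB^2/2$---is precisely the paper's Part B.

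There is, however, a genuine gap in your utility argument. You propose to compare QLA's per-slot drift-plus-penalty against the slackness policy $\{\vartheta_k^{(s_i)}\}$ from (\ref{eq:slackness}) and then invoke Theorem~\ref{thm:primaldualequal}/Corollary~\ref{coro:primaldualequal} to conclude the block-averaged comparison cost equals $\overline{T_1}\,Vf^*_{av}$. But the slackness policy is \emph{not} optimal: its average cost can be anything up to $\delta_{max}$, so plugging it in only yields $\overline{T_1}V\delta_{max}-\overline{T_1}\eta\sum_jq_j(t_0)$, which is the backlog inequality, not the utility one. Corollary~\ref{coro:primaldualequal} is a statement about the dual function $g(\cdot)$, not about any particular comparison policy's cost. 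The paper's Part A does \emph{not} compare to a policy at all; instead it observes that QLA's per-slot minimum of (\ref{eq:QLAeq}) is \emph{exactly} the per-state dual value $g_{S(t)}(\bv{q}(t))$ from (\ref{eq:dual_separable_si}), so the one-slot inequality becomes $L(t+1)-L(t)+Vf^Q(t)\le B^2+g_{S(t)}(\bv{q}(t))$. Summing over the block, shifting $g_{S(t)}(\bv{q}(t))$ to $g_{S(t)}(\bv{q}(t_0))$ via the Lipschitz bound (\ref{eq:dualbddslope_si}), and applying the renewal identity gives $\overline{T_1}\,g(\bv{q}(t_0))$ on the right; only \emph{then} does Corollary~\ref{coro:primaldualequal} yield $g(\bv{q}(t_0))\le Vf^*_{av}$. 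If you want to stick with a comparison-policy route, you would need an $\epsilon$-optimal stationary randomized policy (cost $\le f^*_{av}+\epsilon$, constraints satisfied) and let $\epsilon\to0$; the slackness policy alone is not enough.
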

%\begin{proof}
%See Appendix B. 
%\end{proof}

Note that $\eta, C, D=\Theta(1)$ in (\ref{eq:qla_backlog}), i.e., independent of $V$. Hence Theorem \ref{thm:generalq} shows that QLA indeed achieves an exact $[O(1/V), O(V)]$ utility-delay tradeoff for general stochastic network optimization problems with  Markovian network dynamics. Although our bounds  may be loose when the number of states is large, we note that Theorem \ref{thm:generalq} also applies to the case when $S(t)$ evolves according to a Markov modulated i.i.d. process, in which case there is a Markov chain of only a few states, but in each Markov state, there can be many i.i.d. randomness. For example, suppose $S(t)$ is i.i.d. with $10^4$ states. Then we can view $S(t)$ as having one Markov state, but within the Markov state, it has $10^4$ i.i.d. random choices. In this case, Theorem \ref{thm:generalq} will apply with $C=2$ and $D=0$. 
These Markov modulated processes can easily be incorporated into our analysis by taking expectation over the i.i.d. randomness of the current Markov state in Equation (\ref{eq:driftcompute_noexp}). 
These Markov modulated processes are important in stochastic modeling and include the $ON/OFF$ processes for modeling time-correlated arrivals processes, e.g., \cite{neely_maximaldelay_ton}. 

\begin{proof} (Theorem \ref{thm:generalq}) 
To prove the theorem, we first define the Lyapunov function $L(t)=\frac{1}{2}\sum_{j=1}^{r}q_j^2(t)$. 
By using the queueing dynamic equation (\ref{eq:queuedynamic}), it is easy to obtain that:
\begin{eqnarray*}
\frac{1}{2}q_j^2(t+1)-\frac{1}{2}q_j^2(t) \leq \delta_{max}^2 + q_j(t) [A_j(t)-\mu_j(t)].
\end{eqnarray*}
Summing over all $j=1, ..., r$ and adding to both sides the term $Vf(t)$, we obtain:
\begin{eqnarray}
\hspace{-.3in}&& L(t+1)-L(t) + Vf(t)\label{eq:driftcompute_noexp} \\
\hspace{-.3in}&&\qquad\qquad\qquad\leq B^2 + \bigg\{Vf(t) + \sum_{j=1}^rq_j(t) [A_j(t)-\mu_j(t)]\bigg\}. \nonumber
\end{eqnarray}
We see from (\ref{eq:QLAeq}) then given the network state $S(t)$, QLA chooses an action to \emph{minimize the right-hand side (RHS) at time $t$}. Now compare the term in $\{\}$ in the RHS of (\ref{eq:driftcompute_noexp}) with (\ref{eq:dual_separable_si}), we see that we indeed have:
\begin{eqnarray}
L(t+1)-L(t) + Vf^Q(t) &\leq& B^2 + g_{S(t)}(\bv{q}(t)), \label{eq:samplepath_drift_dual}
\end{eqnarray}
where we use $f^Q(t)=f(x^{QLA}(t))$ to denote the utility incurred by QLA's action at time $t$, and $g_{S(t)}(\cdot)$ is the function (\ref{eq:dual_separable_si}) with the network state being $S(t)$.

(Part A: Proof of Utility) We first prove the utility performance. 
Consider $t=0$ and first assume that $S(0)=s_1$. 
Summing up the inequality (\ref{eq:samplepath_drift_dual}) from time $t=0$ to time $t=T_{1}(0)-1$, we have:
\begin{eqnarray*}
L(T_{1}(0))-L(0) + \sum_{t=0}^{T_{1}(0)-1}Vf^Q(t) \leq T_{1}(0)B^2 \qquad\qquad\\+ \sum_{t=0}^{T_{1}(0)-1}g_{S(t)}(\bv{q}(t)). 
\end{eqnarray*}
This can be rewritten as: 
\begin{eqnarray}
\hspace{-.3in}&&L(T_{1}(0))-L(0) + \sum_{t=0}^{T_{1}(0)-1}Vf^Q(t)\leq T_{1}(0)B^2 \label{eq:samplepath_drift_sum}\\
\hspace{-.3in}&& \,\,\,+ \sum_{t=0}^{T_{1}(0)-1}g_{S(t)}(\bv{q}(0)) + \sum_{t=0}^{T_{1}(0)-1}\big[ g_{S(t)}(\bv{q}(t)) - g_{S(t)}(\bv{q}(0))\big].\nonumber
\end{eqnarray}
Using (\ref{eq:dualbddslope_si}) and the fact that $||\bv{q}(t+\tau)-\bv{q}(t)||\leq \tau B$, we see that the final term can be bounded by:
\begin{eqnarray*}
\bigg|\sum_{t=0}^{T_{1}(0)-1}\big[ g_{S(t)}(\bv{q}(t)) - g_{S(t)}(\bv{q}(0))\big] \bigg| \leq \sum_{t=0}^{T_{1}(0)-1} t B^2\qquad\qquad \\
= \big[\frac{1}{2}(T_1(0))^2-\frac{1}{2}T_1(0)\big]B^2.
\end{eqnarray*}
Plugging this into (\ref{eq:samplepath_drift_sum}), and letting $\hat{C}=\frac{1}{2}(T_1(0))^2+\frac{1}{2}T_1(0)$, we obtain:
\begin{eqnarray}
\hspace{-.3in}&&L(T_{1}(0))-L(0) + \sum_{t=0}^{T_{1}(0)-1}Vf^Q(t)\\
\hspace{-.3in}&&\qquad\qquad\qquad\leq \hat{C}B^2 + \sum_{t=0}^{T_{1}(0)-1}g_{S(t)}(\bv{q}(0))\nonumber\\
\hspace{-.3in}&& \qquad\qquad\qquad = \hat{C}B^2 + \sum_{s_i} n^{T_1(0)}_{s_i}(0) g_{s_i}(\bv{q}(0)).\nonumber
\end{eqnarray}
Here $n^{T_1(0)}_{s_i}(t_0)$ denotes the number of times the network state $s_i$ appears in the period $[t_0, t_0+T_1(0)-1]$. Now we take expectations over $T_1(0)$ on both sides conditioning on $S(0)=s_1$ and $\bv{q}(0)$, we have:
\begin{eqnarray}
\hspace{-.3in}&&\expect{L(T_{1}(0))-L(0)\left.|\right. S(0), \bv{q}(0)} \label{eq:visittime-expr}\\
\hspace{-.3in}&& \qquad\qquad\qquad+ \expect{\sum_{t=0}^{T_{1}(0)-1}Vf^Q(t)\left.|\right. S(0), \bv{q}(0)}\nonumber\\
\hspace{-.3in}&& \qquad\qquad \leq CB^2 + \sum_{s_i} \expect{n^{T_1(0)}_{s_i}(0)\left.|\right. S(0), \bv{q}(0)}g_{s_i}(\bv{q}(0)).\nonumber
\end{eqnarray}
Here $C=\expect{\hat{C}\left.|\right. S(0), \bv{q}(0)}=\frac{1}{2}[\overline{T_1^2}+\overline{T_{1}}]$. The above equation uses the fact that $g_{s_i}(\bv{q}(0))$ is a constant given $\bv{q}(0)$. Now by Theorem 2 in Page 154 of \cite{gallager_stochastic} we have that:
\begin{equation}
\expect{n^{T_i(0)}_{s_i}(0)\left.|\right. S(0), \bv{q}(0)}=\frac{\pi_{s_i}}{\pi_{s_1}}. \label{eq:visittime-renewal}
\end{equation}
%Now by Lemma $5$ and $6$ in Chapter 2 of \cite{aldousmarkovchainbook}, we have that $\expect{n^{T_i(0)}_{s_j}(0)\left.|\right. S(0), \bv{q}(0)}=\frac{\pi_{s_j}}{\pi_{s_i}}$ and $\overline{T_i}=\expect{T_i(0)\left.|\right. S(0), \bv{q}(0)}=\frac{1}{\pi_{s_i}}$. 
Plug this into (\ref{eq:visittime-expr}), we have:
\begin{eqnarray}
\hspace{-.3in}&&\expect{L(T_{1}(0))-L(0)\left.|\right. S(0), \bv{q}(0)} \label{eq:visittime-expr-2}\\
\hspace{-.3in}&& \qquad\qquad\qquad+ \expect{\sum_{t=0}^{T_{1}(0)-1}Vf^Q(t)\left.|\right. S(0), \bv{q}(0)}\nonumber\\
\hspace{-.3in}&& \qquad\qquad \leq CB^2 + \frac{1}{\pi_{s_1}}\sum_{s_i}\pi_{s_i}g_{s_i}(\bv{q}(0)).\nonumber
\end{eqnarray}
Now using (\ref{eq:dualfunction_sumofsi}) and (\ref{eq:exp-time-prob}), i.e., $\overline{T_1}=1/\pi_{s_1}$ and $g(\bv{\gamma})=\sum_{s_i}\pi_{s_i}g_{s_i}(\bv{\gamma})$, we obtain:
\begin{eqnarray}
\hspace{-.3in}&&\expect{L(T_{1}(0))-L(0)\left.|\right. S(0), \bv{q}(0)} \label{eq:drift_expected_dual}\\
\hspace{-.3in}&& \qquad\qquad\qquad+ \expect{\sum_{t=0}^{T_{1}(0)-1}Vf^Q(t)\left.|\right. S(0), \bv{q}(0)}\nonumber\\
%\hspace{-.3in}&& \qquad\qquad\quad \leq CB^2 + \frac{1}{\pi_{s_1}}\sum_{s_i} \pi_{s_i}g_{s_i}(\bv{q}(0))\nonumber\\
\hspace{-.3in}&& \qquad\qquad\quad \leq  CB^2 + \overline{T_{1}} g(\bv{q}(0)).\nonumber
\end{eqnarray}
By Corollary \ref{coro:primaldualequal}, we see that $g(\bv{q}(0))\leq Vf^*_{av}$. Thus we conclude that:
\begin{eqnarray}
\hspace{-.3in}&&\expect{L(T_{1}(0))-L(0)\left.|\right. S(0), \bv{q}(0)} \\
\hspace{-.3in}&& \qquad\qquad\quad+ \expect{\sum_{t=0}^{T_{1}(0)-1}Vf^Q(t)\left.|\right. S(0), \bv{q}(0)}\nonumber\\
\hspace{-.3in}&& \qquad\qquad\quad \leq  CB^2 + \overline{T_{1}} Vf^*_{av}.\nonumber
\end{eqnarray}
More generally, if $t_{k}=t_{k-1}+T_{i}(t_{k-1})$ with $t_0=0$ is the $k^{th}$ time after time $0$ when $S(t)=s_1$, we have:
\begin{eqnarray}
\hspace{-.3in}&&\expect{L(t_{k+1})-L(t_{k})\left.|\right. S(t_{k}), \bv{q}(t_{k})} \\
\hspace{-.3in}&& \qquad\qquad\quad+ \expect{\sum_{t=t_{k}}^{t_{k+1}-1}Vf^Q(t)\left.|\right. S(t_{k}), \bv{q}(t_{k})}\nonumber\\
\hspace{-.3in}&& \qquad\qquad\quad \leq CB^2 + \overline{T_{1}} Vf^*_{av},\nonumber
\end{eqnarray}
%Hence $S(t_{k})=s_1$ for all $k$. 
Now taking expectations over $\bv{q}(t_{k})$ on both sides, we have:
\begin{eqnarray}
\hspace{-.3in}&&\expect{L(t_{k+1})-L(t_k)\left.|\right. S(t_k)} + \expect{\sum_{t=t_k}^{t_{k+1}-1}Vf^Q(t)\left.|\right. S(t_{k})}\nonumber\\
\hspace{-.3in}&& \qquad\qquad\qquad\qquad\qquad\qquad\quad \leq CB^2 + \overline{T_{1}} Vf^*_{av}.\nonumber
\end{eqnarray}
Note that given $S(0)=s_1$, we have the complete information of $S(t_{k})$ for all $k$. Hence the above is the same as: 
\begin{eqnarray}
\hspace{-.3in}&&\expect{L(t_{k+1})-L(t_k)\left.|\right. S(0)}+ \expect{\sum_{t=t_k}^{t_{k+1}-1}Vf^Q(t)\left.|\right. S(0)} \nonumber\\
\hspace{-.3in}&& \qquad\qquad \qquad\qquad \qquad\qquad\quad \leq CB^2 + \overline{T_{1}} Vf^*_{av}. 
\end{eqnarray}
Summing the above from $k=0$ to $K-1$, we get:
\begin{eqnarray}
\hspace{-.3in}&&\expect{L(t_{K})-L(0)\left.|\right. S(0)=s_1} \label{eq:thesum-ineq}\\
\hspace{-.3in}&& \qquad\qquad\quad+ \expect{\sum_{t=0}^{t_{K}-1}Vf^Q(t)\left.|\right. S(0)=s_1}\nonumber\\
\hspace{-.3in}&& \qquad\qquad\quad \leq KCB^2 + K\overline{T_{1}} Vf^*_{av}.\nonumber
\end{eqnarray}
Using the facts that $|f(t)|\leq\delta_{max}$, $\lceil K\overline{T_1}\rceil\leq K\overline{T_1}+1$, $L(0)=0$ and $L(t)\geq0$ for all $t$, we have:
\begin{eqnarray}
\hspace{-.3in}&&\expect{\sum_{t=0}^{\lceil K\overline{T_1}\rceil-1}Vf^Q(t)\left.|\right. S(0)=s_1}\nonumber\\
\hspace{-.3in}&&\qquad \leq KCB^2 + K\overline{T_{1}} Vf^*_{av} + V\delta_{max}\\
\hspace{-.3in}&&\qquad\qquad\qquad+V\delta_{max}\expect{|K\overline{T_1}-t_K|\left.|\right. S(0)=s_1}.\nonumber
\end{eqnarray}
Dividing both sides by $V\lceil K\overline{T_1}\rceil$, we get: 
\begin{eqnarray}
\hspace{-.3in}&&\frac{1}{\lceil K\overline{T_1}\rceil}\expect{\sum_{t=0}^{\lceil K\overline{T_1}\rceil-1}f^Q(t)\left.|\right. S(0)=s_1} \label{eq:expsamplepath_bound}\\
\hspace{-.3in}&& \qquad\quad\leq \frac{CB^2K}{V\lceil K\overline{T_1} \rceil} +  f^*_{av}\frac{K\overline{T_1}V}{\lceil K\overline{T_1} \rceil} +\frac{V\delta_{max}}{\lceil K\overline{T_1} \rceil}\nonumber\\
\hspace{-.3in}&& \qquad\qquad+ \expect{\big|\frac{t_K-K\overline{T_1}}{K}\big|\left.|\right. S(0)=s_1}\frac{K\delta_{max}}{\lceil K\overline{T_1} \rceil}.\nonumber
\end{eqnarray}
Since $t_K=\sum_{k=0}^{K-1}T_i(t_k)$ with $t_0=0$, and each $T_1(t_k)$ is i.i.d. distributed with mean $\overline{T_1}$ and second moment $\overline{T_1^2}<\infty$, we have:
\begin{eqnarray}
\hspace{-.3in}&&\bigg[\expect{\big|\frac{t_K-K\overline{T_1}}{K}\big|\left.|\right. S(0)=s_1}\bigg]^2\label{eq:switchexp_limit}\\
\hspace{-.3in}&&\qquad\qquad\leq \expect{\big|\frac{t_K-K\overline{T_1}}{K}\big|^2\left.|\right. S(0)=s_1}
\leq \frac{\overline{T_1^2}}{K}.\nonumber
\end{eqnarray}
This implies that the term $\expect{\big|\frac{t_K-K\overline{T_1}}{K}\big|\left.|\right. S(0)=s_1}\rightarrow0$ as $K\rightarrow\infty$. 
%This by the $L^2$ Weak Law (Page 35, \cite{durrett_prob}) implies that:
%\begin{eqnarray}
%\limsup_{K\rightarrow\infty}\expect{\big|\frac{t_K-K\overline{T_i}}{K}\big|\left.|\right. S(0)=s_i}=0. \label{eq:switchexp_limit}
%\end{eqnarray}
%Now taking the limit as $K\rightarrow\infty$ and using the fact that $\frac{t_k}{k\overline{T_i}}\rightarrow 1$ as $k\rightarrow\infty$, we prove in Appendix B that:
%\begin{eqnarray}
%\lim_{K\rightarrow\infty}\expect{\big|1-\frac{t_K}{K\overline{T_i}}\big|\left.|\right. S(0)=s_i}=0. \label{eq:switchexp_limit}
%\end{eqnarray}
It is also easy to see that $\lceil K\overline{T_1} \rceil\rightarrow\infty$ and $\frac{K}{\lceil K\overline{T_1} \rceil}\rightarrow\frac{1}{\overline{T_1}}$ as $K\rightarrow\infty$.  
Thus using (\ref{eq:switchexp_limit}) and taking a limsup as $K\rightarrow\infty$ in (\ref{eq:expsamplepath_bound}), we have:  
\begin{eqnarray*}
\hspace{-.3in}&&\limsup_{K\rightarrow\infty}\frac{1}{\lceil K\overline{T_1} \rceil}\expect{\sum_{t=0}^{\lceil K\overline{T_1} \rceil-1}f^Q(t)\left.|\right. S(0)=s_1}\leq \frac{CB^2}{V\overline{T_1}} +  f^*_{av}. %\label{eq:expsamplepath_limsupbound}
\end{eqnarray*}
%Now note that this argument indeed holds for $S(0)=s_j,\,j=1, ..., M$. Thus by recalling that $\overline{C_i}=\frac{1}{2}[\overline{T_i^2}+\overline{T_i}]\leq C_{max}/2$, we conclude: 
%%defining $C_{max}=\max_i (\overline{C_i}/\overline{T_i})$, we conclude that:
%\begin{eqnarray*}
%f^{QLA}_{av}\leq f^*_{av}+\frac{C_{max}B^2}{2V}.%,\,\, w.p.1. 
%\end{eqnarray*}

Now consider the case when the starting state is $s_j\neq s_1$. In this case, let $T_{j1}(0)$ be the first time the system enters state $s_1$. Then we see that the above argument can be repeated for the system starting at time $T_{j1}(0)$. The only difference 
is that now the ``initial'' backlog in this case is given by $\bv{q}(T_{j1}(0))$. Specifically, we have from (\ref{eq:thesum-ineq}) that:
\begin{eqnarray}
\hspace{-.3in}&&\expect{L(\hat{t_{K}})-L(T_{j1}(0))\left.|\right. T_{j1}(0), S(0)=s_j} \\
\hspace{-.3in}&& \qquad\qquad\quad+ \expect{\sum_{t=T_{j1}(0)}^{\hat{t_{K}}-1}Vf^Q(t)\left.|\right. T_{j1}(0), S(0)=s_j}\nonumber\\
\hspace{-.3in}&& \qquad\qquad\quad \leq KCB^2 + K\overline{T_{1}} Vf^*_{av}.\nonumber
\end{eqnarray}
Here $\hat{t_K}$ is the $K^{th}$ return time of $S(t)$ to $s_1$ after time $T_{j1}(0)$. We  thus obtain:
\begin{eqnarray*}
\hspace{-.3in}&&V\expect{\sum_{t=T_{j1}(0)}^{\hat{t_K}-1}f^Q(t)\left.|\right. T_{j1}(0), S(0)=s_j}\\
\hspace{-.3in}&&  \leq KCB^2 +  K\overline{T_1}Vf^*_{av} + \expect{L(T_{j1}(0))\left.|\right. T_{j1}(0), S(0)=s_j}.
\end{eqnarray*}
However, since the increment of each queue is no more than $\delta_{max}$ every time slot, we see that $L(T_{j1}(0))\leq [T_{j1}(0)]^2B^2/2$. Also using the fact that $|f(t)|\leq\delta_{max}$ for all $0\leq t\leq T_{j1}(0)$, we have:
%the cost is bounded by $\delta_{max}$ every time slot, we have:
\begin{eqnarray*}
\hspace{-.3in}&&V\expect{\sum_{t=0}^{\hat{t_K}-1}f^Q(t)\left.|\right. T_{j1}(0), S(0)=s_j}\\
\hspace{-.3in}&& \qquad  \leq KCB^2 +  K\overline{T_1}Vf^*_{av} + [T_{j1}(0)]^2B^2/2 + T_{j1}(0)V\delta_{max}.
\end{eqnarray*}
Now taking expectations over $T_{j1}(0)$ on both sides,  and using a similar argument as (\ref{eq:expsamplepath_bound}), we get that for every starting state $s_j$, we have:
\begin{eqnarray*}
\hspace{-.3in}&&\limsup_{K\rightarrow\infty}\frac{1}{\lceil\overline{T_{j1}}+K\overline{T_1}\rceil}\expect{\sum_{t=0}^{\lceil\overline{T_{j1}}+K\overline{T_1}\rceil-1}f^Q(t)\left.|\right. S(0)=s_j}\\
\hspace{-.3in}&&\qquad\qquad\qquad\qquad\qquad\qquad\qquad\qquad\qquad\leq \frac{CB^2}{V\overline{T_1}} +  f^*_{av}. %\label{eq:expsamplepath_limsupbound}
\end{eqnarray*}
This proves the utility part (\ref{eq:qla_utility}). 
%Using the fact that the cost is bounded by $\delta_{max}$, we have:dividing both sides by $K\overline{T_1}V$,

%Since $T_{ji}(t)$ is finite with probability 1, $\bv{q}(T_{ji}(0))$ is finite with probability one, and the actions from $0$ to $T_{ji}(t)$ does not affect the utility performance. Hence the above result still holds in this case. 

%(Part B: Proof of Backlog) See Appendix B. 

(Part B: Proof of Backlog) Now we look at the backlog performance of QLA. We similarly first assume $S(0)=s_1$. 
Recall that equation (\ref{eq:drift_expected_dual}) says:
%Using , we get that:
\begin{eqnarray}
\hspace{-.3in}&&\expect{L(T_{1}(0))-L(0)\left.|\right. S(0), \bv{q}(0)} \label{eq:drift_expected_dual2} \\
\hspace{-.3in}&& \qquad\qquad\qquad+ \expect{\sum_{t=0}^{T_{1}(0)-1}Vf^Q(t)\left.|\right. S(0), \bv{q}(0)}\nonumber\\
\hspace{-.3in}&& \qquad\quad \leq  CB^2 + \overline{T_1} g(\bv{q}(0)).\nonumber
%\hspace{-.3in}&& \qquad\quad  \leq CB^2 + \overline{T_1}V\delta_{max} - \overline{T_1}\eta\sum_{j=1}^rq_j(0).\nonumber
\end{eqnarray}
Using the definition of $g_c(\bv{\gamma})$ defined in (\ref{eq:dual_convex}) in Appendix A, plugging the set of $\{\vartheta^{(s_i)}_k\}_{i=1, 2, ...}^{k=1, 2, ..., r+2}$ variables and the set of actions $\{x^{(s_i)}_k\}_{i=1, 2, ...}^{k=1,2, ..., r+2}$ in the slackness assumption (\ref{eq:slackness}), and using the facts that $g(\bv{\gamma})=g_c(\bv{\gamma})$ and $0\leq f(t)\leq\delta_{max}$, it can be shown that $g(\bv{\gamma})$ satisfies: 
\begin{eqnarray}
g(\bv{\gamma}) \leq V\delta_{max} - \eta \sum_{j=1}^r \gamma_j.\label{eq:dualproperty}
\end{eqnarray}
Using this in (\ref{eq:drift_expected_dual2}), we have: 
\begin{eqnarray}
\hspace{-.3in}&&\expect{L(T_{1}(0))-L(0)\left.|\right. S(0), \bv{q}(0)} \\
\hspace{-.3in}&& \qquad\qquad\qquad+ \expect{\sum_{t=0}^{T_{1}(0)-1}Vf^Q(t)\left.|\right. S(0), \bv{q}(0)}\nonumber\\
\hspace{-.3in}&& \qquad\quad  \leq CB^2 + \overline{T_1}V\delta_{max} - \overline{T_1}\eta\sum_{j=1}^rq_j(0).\nonumber
\end{eqnarray}
More generally, we have:
\begin{eqnarray}
\hspace{-.3in}&&\expect{L(t_{k+1})-L(t_k)\left.|\right. S(t_{k}), \bv{q}(t_{k})} \label{eq:drift_backlog_epsilon}\\
\hspace{-.3in}&&  \qquad\qquad\qquad\qquad\leq CB^2 + \overline{T_1}V\delta_{max} - \overline{T_1} \eta\sum_{j=1}^rq_j(t_{k}). \nonumber
\end{eqnarray}
Here $t_k$ is the $k^{th}$ return time of $S(t)$ to state $s_1$ after time $0$. 
Taking expectations on both sides over $\bv{q}(t_k)$ and rearranging the terms, we get:
\begin{eqnarray}
\hspace{-.3in}&&\expect{L(t_{k+1})-L(t_k)\left.|\right. S(t_{k})} \label{eq:drift_backlog_epsilon2}\\
\hspace{-.3in}&&\qquad+ \overline{T_1} \eta\sum_{j=1}^r\expect{q_j(t_{k})\left.|\right. S(t_{k})} \leq CB^2 + \overline{T_1}V\delta_{max}. \nonumber
\end{eqnarray}
Now using the fact that conditioning on $S(t_k)$ is the same as conditioning on $S(0)$, we have:
\begin{eqnarray}
\hspace{-.3in}&&\expect{L(t_{k+1})-L(t_k)\left.|\right. S(0)} \label{eq:drift_backlog_epsilon3}\\
\hspace{-.3in}&&\qquad+ \overline{T_1} \eta\sum_{j=1}^r\expect{q_j(t_{k})\left.|\right. S(0)} \leq CB^2 + \overline{T_1}V\delta_{max}. \nonumber
\end{eqnarray}
Summing over $k=0, ..., K-1$, rearranging the terms, and using the facts that $L(0)=0$ and $L(t)\geq0$ for all $t$:  
\begin{eqnarray}
\hspace{-.5in}&&\sum_{k=0}^{K-1}\overline{T_1} \eta\sum_{j=1}^r\expect{q_j(t_{k})\left.|\right. S(0)} \leq KCB^2 + K\overline{T_1}V\delta_{max}. \label{eq:drift_backlog_epsilon4}
\end{eqnarray}
Dividing both sides by $K\overline{T_1}\eta$, we get:
\begin{eqnarray}
\hspace{-.3in}&&\frac{1}{K}\sum_{k=0}^{K-1}\sum_{j=1}^r\expect{q_j(t_{k})\left.|\right. S(0)} \leq \frac{CB^2 + \overline{T_1}V\delta_{max} }{\overline{T_1} \eta}. \label{eq:drift_backlog_epsilon5}
\end{eqnarray}
Now using the fact that $|q_j(t+\tau)-q_j(t)|\leq\tau\delta_{max}$, we have:
\begin{eqnarray*}
\sum_{\tau=t_{k}}^{t_{k+1}-1}\sum_{j=1}^rq_j(\tau)\leq T_{1}(t_k)\sum_{j=1}^rq_j(t_{k})\qquad\qquad\\
+[\frac{1}{2}(T_1(t_k))^2-\frac{1}{2}T_1(t_k)]B^2.
\end{eqnarray*}
%Taking expectation conditioning on $S(t_{k-1}), \bv{q}(t_{k-1})$, we obtain:
%\begin{eqnarray*}
%\sum_{\tau=t_{k-1}}^{t_{k}}\sum_{j=1}^rq_j(\tau)\leq\sum_{\tau=t_{k-1}}^{t_{k}}\sum_{j=1}^rq_j(t_{k-1})\qquad\qquad\\
%+[\frac{1}{2}(T_i(k))^2-\frac{1}{2}T_i(k)]\delta_{max}.
%\end{eqnarray*}
Taking expectations on both sides conditioning on $S(0)$ (which is the same as conditioning on $S(t_k)$), we get:
\begin{eqnarray*}
\hspace{-.3in}&&\expect{\sum_{\tau=t_{k}}^{t_{k+1}-1}\sum_{j=1}^rq_j(\tau)\left.|\right. S(0)}\\
\hspace{-.3in}&&\qquad\leq \expect{T_{1}(t_k)\sum_{j=1}^rq_j(t_{k})\left.|\right. S(0)}
+[\frac{1}{2}\overline{T_1^2}-\frac{1}{2}\overline{T_1}]B^2\\
\hspace{-.3in}&&\qquad = \overline{T_1}\expect{\sum_{j=1}^rq_j(t_{k})\left.|\right. S(0)}
+[\frac{1}{2}\overline{T_1^2}-\frac{1}{2}\overline{T_1}]B^2.
\end{eqnarray*}
In the last step, we have used the fact that $T_1(t_k)$ is independent of $\bv{q}(t_k)$. 
Summing the above equation over $k=0, 1, ..., K-1$, we have:
%Using the above equation, we see that:
\begin{eqnarray*}
\hspace{-.3in}&&\expect{\sum_{t=0}^{t_K-1}\sum_{j=1}^rq_j(t)\left.|\right. S(0)} \\
\hspace{-.3in}&&\qquad\leq \sum_{k=0}^{K-1} \overline{T_1}\expect{\sum_{j=1}^rq_j(t_k)\left.|\right. S(0)} + \frac{K[\overline{T_1^2}-\overline{T_1}]B^2}{2}.
\end{eqnarray*}
Dividing both sides by $K$ and using (\ref{eq:drift_backlog_epsilon5}), we have:
\begin{eqnarray*}
\hspace{-.3in}&&\frac{1}{K}\expect{\sum_{t=0}^{t_K-1}\sum_{j=1}^rq_j(t)\left.|\right. S(0)} \\
\hspace{-.3in}&&\leq\frac{\overline{T_1}}{K} \sum_{k=0}^{K-1} \expect{\sum_{j=1}^rq_j(t_k)\left.|\right. S(0)} + \frac{[\overline{T_1^2}-\overline{T_1}]B^2}{2}\\
\hspace{-.3in}&&\leq \frac{CB^2 + \overline{T_1}V\delta_{max} }{\eta} + \frac{[\overline{T_1^2}-\overline{T_1}]B^2}{2}. 
\end{eqnarray*}
Now notice that we always have $t_K\geq K$. Hence:
\begin{eqnarray*}
\hspace{-.3in}&&\frac{1}{K}\sum_{t=0}^{K-1}\expect{\sum_{j=1}^rq_j(t)\left.|\right. S(0)} \leq\frac{1}{K}\expect{\sum_{t=0}^{t_K-1}\sum_{j=1}^rq_j(t)\left.|\right. S(0)} \\
\hspace{-.3in}&&\qquad\qquad\qquad\qquad \leq \frac{CB^2 + \overline{T_1}V\delta_{max} }{\eta} +\frac{[\overline{T_1^2}-\overline{T_1}]B^2}{2}. 
\end{eqnarray*}
This proves  (\ref{eq:qla_backlog}) for the case when $S(0)=s_1$. The case when $S(0)=s_j\neq s_1$ can be treated in a similar way as in Part A. It can be shown that the above backlog bound still holds, as the effect of the backlog values before the first hitting time $T_{j1}(0)$ will vanish as time increases. 
This proves the backlog bound (\ref{eq:qla_backlog}).
Theorem \ref{thm:generalq} thus follows by combining the two proofs. 
\end{proof}

%%\vspace{-.1in}
%\section{Conclusion}
%This paper proves that the QLA algorithm achieves an exact $[O(1/V), O(V)]$ utility-delay tradeoff in general stochastic network optimization problems with Markov network dynamics. The analysis uses duality theory and a variable multi-slot Lyaponov drift argument where the number of slots corresponds to the renewal time of the network randomness. 

%

%\vspace{-.1in}
\section*{Appendix A- Proof of Theorem \ref{thm:primaldualequal}}
%We first define the following notations. Given $t_0$, define $\script{T}_{s_i}(t_0, k)$ to be the set of slots at which $S(\tau)=s_i$ for $\tau\in[t_0, t_0+k-1]$. 
%We say that a state process $S(t)$ is \emph{convergent} if for a given $\nu>0$, there exists a \emph{convergent interval} $T_{\nu}<\infty$ \cite{neelypowerjsac} for the $S(t)$ such that starting from any $t_0$, regardless of past history, we have:
%\begin{eqnarray}
%\sum_{i=1}^M \bigg| \pi_{s_i}-\frac{\expect{||\script{T}_{s_i}(t_0, T_{\nu})|| \left |\right. \script{H}(t_0)}}{T_{\nu}} \bigg|\leq \nu,\label{eq:state_convergent}
%\end{eqnarray}
%where $||\script{T}_{s_i}(t_0, T_{\nu})||$ is the cardinality of $\script{T}_{s_i}(t_0, T_{\nu})$, and $\script{H}(t_0)=\{S(\tau)\}_{\tau=0}^{t_0-1}$ denotes the network state history up to time $t_0$. Note that this condition holds when $S(t)$ is Markovian. 
We now prove Theorem \ref{thm:primaldualequal}. The proof idea is shown in Fig. \ref{figure:primaldual}, and can be described as follows: First we construct a ``convexified'' version of the deterministic problem (\ref{eq:primal}) and show that it gives the exact value of $Vf^*_{av}$. 
We then show that the dual function $g_c(\bv{\gamma})$ of this convexified problem,  is exactly the same as the dual function $g(\bv{\gamma})$ of (\ref{eq:dualproblem}). Hence the two dual problems have the same optimal value. We finally show that the duality gap is zero for the convexified problem by showing that its ``utility-constraint'' set is convex. Hence $g(\bv{\gamma}^*_V)=g^*_c=Vf^*_{av}$, where $g^*_c$ is the optimal value of the dual problem for the convexified problem. %$g_c(\bv{\gamma})$. 
%Combining them together prove  %The proof idea is shown in .
\begin{figure}[cht]
\centering
\includegraphics[height=1.5in, width=3.5in]{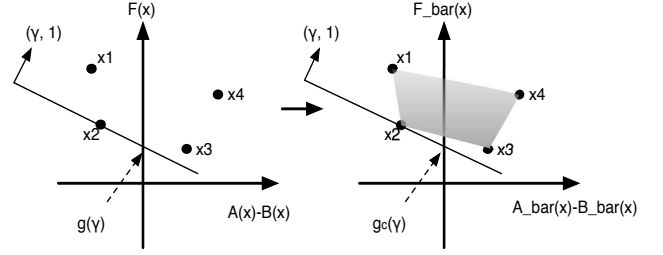}%\label{figure:primaldual}
\caption{The left figure shows the utility-constraint set of the deterministic problem with $r=1, M=1$ and its dual function. The right figure shows the utility-constraint set of the corresponding ``convexified'' problem and its dual function. It can be seen that the two dual functions are the same, and that the ``convexified'' problem has no duality gap.  }\label{figure:primaldual}
\end{figure}
%$g(\bv{\gamma})$ and $g_c(\bv{\gamma})$ are the dual functions of the original problem and the ``convexified'' problem, respectively. 

\begin{proof} (Theorem \ref{thm:primaldualequal})
For notation simplicity, we denote the set of $\bv{x}=(x^{(s_1)}, x^{(s_2)}, ...)$, $x^{(s_i)}\in\script{X}^{(s_i)}$ as $\bv{\script{X}}$. We then consider the following modified deterministic problem:
\begin{eqnarray}
\hspace{-.4in}&&\min:\,\,\, \overline{\script{F}}(\{a_k^{(s_i)}, \bv{x}^k\})\triangleq V\sum_{s_i}\pi_{s_i}\sum_{k=1}^{r+2}a^{(s_i)}_kf(s_i, x^{(s_i)}_k)\label{eq:primal_convex}\\
\hspace{-.4in}&&s.t.\quad\,\,\overline{\script{A}}_j(\{a_k^{(s_i)}, \bv{x}^k\})\triangleq\sum_{s_i}\pi_{s_i}\sum_{k=1}^{r+2}a^{(s_i)}_kA_j(s_i, x^{(s_i)}_k)\nonumber\\
\hspace{-.4in}&&\qquad\qquad\leq \overline{\script{B}}_j(\{a_k^{(s_i)}, \bv{x}^k\})\triangleq\sum_{s_i}\pi_{s_i}\sum_{k=1}^{r+2}a^{(s_i)}_k\mu_j(s_i, x^{(s_i)}_k),\nonumber\\
\hspace{-.4in}&&\qquad\quad \bv{x}^k\in \bv{\script{X}}\quad \forall\, k=1, ..., r+2,\nonumber\\
\hspace{-.4in}&&\qquad\quad a^{(s_i)}_k\geq0, \sum_{k=1}^{r+2}a^{(s_i)}_k=1, \,\,\forall\,\, s_i.\nonumber
\end{eqnarray}
Here $\bv{x}^k=(x^{(s_i)}_k, x^{(s_2)}_k, ...)$. 
Due to the use of the auxiliary variables $\{a^{(s_i)}_k\}$, this problem can be viewed as the ``convexified'' version of the original deterministic problem (\ref{eq:primal}). Denote the optimal value of (\ref{eq:primal_convex}) as $OPT_c$. \footnote{Without loss of generality, we assume such an optimal value exists. Else we can replace the ``min'' with ``inf'' in (\ref{eq:primal_convex}), 
consider an $\epsilon$-optimal solution and let $\epsilon\rightarrow0$. Below we will use similar assumptions about the existence of an optimal policy for the stochastic problem, and the existence of an optimal solution to (\ref{eq:primal_convex}). } We will prove Theorem \ref{thm:primaldualequal} via the following two claims. The first claim shows that $OPT_c=Vf^*_{av}$ and the second claim shows that $OPT_c=g(\bv{\gamma}^*_V)$. 

%Now denote the optimal value of  (\ref{eq:primal}) as $OPT$ and the optimal value of (\ref{eq:primal_convex}) as $OPT_c$. It is easy to see that $OPT\geq OPT_c$. 
%\footnote{Without loss of generality, we can assume such a policy exists. Else we can consider an $\epsilon$-optimal policy and let $\epsilon\rightarrow0$. }
\begin{claim}\label{claim:primal_opt}
$Vf^*_{av}=OPT_c$
\end{claim}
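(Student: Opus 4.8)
\textbf{Proof proposal for Claim \ref{claim:primal_opt}.}

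The plan is to prove $Vf^*_{av} = OPT_c$ by establishing the two inequalities $OPT_c \leq Vf^*_{av}$ and $Vf^*_{av} \leq OPT_c$ separately. For the direction $OPT_c \leq Vf^*_{av}$, I would start from an optimal stable policy $\Pi^*$ for the stochastic problem (P1), which achieves time average cost $f^*_{av}$ and keeps all queues stable. The key is to extract from the long-run behavior of $\Pi^*$ a feasible point for the convexified deterministic problem (\ref{eq:primal_convex}). Concretely, for each state $s_i$, look at the empirical conditional distribution of the action $x(t)$ taken by $\Pi^*$ at the slots where $S(t)=s_i$; since the action set $\script{X}^{(s_i)}$ is compact and lies in a space where $f,A_j,\mu_j$ are bounded, Carath\'eodory's theorem lets us represent the resulting average cost/traffic/service vectors (an $(r+2)$-dimensional object: one cost coordinate plus $r$ net-flow coordinates, or rather $r+1$ relevant coordinates — hence the $r+2$ convex-combination terms) as a convex combination of at most $r+2$ extreme actions $x^{(s_i)}_k$ with weights $a^{(s_i)}_k$. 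Averaging over the steady-state $\pi_{s_i}$ and using the ergodic/renewal-reward averaging for the Markov chain $S(t)$, the queue stability constraint (\ref{eq:queuestable}) forces the time-average net arrival rate into each queue $j$ to be $\leq 0$, which translates exactly into the feasibility constraint $\overline{\script{A}}_j \leq \overline{\script{B}}_j$; and the cost of this constructed point equals $Vf^*_{av}$. Hence $OPT_c \leq Vf^*_{av}$.

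For the reverse direction $Vf^*_{av} \leq OPT_c$, I would take an optimal solution $\{a^{(s_i)}_k, \bv{x}^k\}$ of (\ref{eq:primal_convex}) and use it to build a stationary randomized policy for the stochastic problem: whenever $S(t)=s_i$, pick action $x^{(s_i)}_k$ with probability $a^{(s_i)}_k$, independently across slots. Under this policy the arrival and service processes are determined by the Markov chain $S(t)$ together with i.i.d. action randomization, so by the renewal-reward theorem (using $\overline{T_i} = 1/\pi_{s_i} < \infty$ from (\ref{eq:exp-time-prob})) the time-average cost converges to $\frac{1}{V}\overline{\script{F}} = OPT_c/V$ and the time-average net input to queue $j$ converges to $\overline{\script{A}}_j - \overline{\script{B}}_j \leq 0$. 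The only subtlety is that $\overline{\script{A}}_j - \overline{\script{B}}_j \leq 0$ (non-strict) does not by itself immediately give (\ref{eq:queuestable}); here I would invoke the $\eta$-slackness assumption (\ref{eq:slackness}) to perturb: for any $\epsilon \in (0,1)$ mix the optimal randomized policy with the slack policy $\{\vartheta^{(s_i)}_k\}$ in proportions $(1-\epsilon):\epsilon$, obtaining a policy with strictly negative drift $\leq -\epsilon\eta$ on each queue, hence a genuinely stable policy with cost within $O(\epsilon)$ of $OPT_c/V$; letting $\epsilon \to 0$ gives $f^*_{av} \leq OPT_c/V$.

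The main obstacle I anticipate is the careful handling of the averaging/limit interchange on the countably-infinite-state Markov chain: justifying that the time-average empirical action distribution conditioned on each state $s_i$ is well-defined and that the $\limsup$ in the definitions (\ref{eq:queuestable})–(\ref{eq:timeavcost}) behaves correctly along a subsequence where all the relevant averages converge simultaneously. A clean way to do this without heavy ergodic-theory machinery is to run the whole argument over the renewal cycles of the distinguished state $s_1$ (using Assumption \ref{assumption:finitereturn} so that cycle lengths have finite mean), apply the renewal-reward theorem cycle-by-cycle, and pass to limits along the sequence of cycle-completion times; compactness of each $\script{X}^{(s_i)}$ and uniform boundedness by $\delta_{max}$ then make the Carath\'eodory reduction and all limit exchanges routine. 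I would relegate these measure-theoretic details to the appendix and in the main text emphasize the structural correspondence: convexified-feasible points $\leftrightarrow$ stationary randomized policies $\leftrightarrow$ achievable (cost, stability) pairs.
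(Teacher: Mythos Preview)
Your proposal is correct and follows essentially the same two-inequality strategy as the paper: for $OPT_c \leq Vf^*_{av}$, take an optimal stable policy, pass to its time-average utility-constraint vector, place it in the convex hull of the achievable set, and invoke Carath\'eodory to produce a feasible point of (\ref{eq:primal_convex}); for $Vf^*_{av} \leq OPT_c$, turn an optimal solution of (\ref{eq:primal_convex}) into a stationary randomized policy, mix it with the $\eta$-slack policy of (\ref{eq:slackness}) to force strict negative drift, and let the mixing parameter go to zero. The only cosmetic difference is that the paper packages the first step via the vector $\bv{J}^{\Pi^*}\in\overline{\script{J}}$ and defers the averaging/limit technicalities to a citation and a footnote about subsequences, whereas you spell out the renewal-cycle justification explicitly; both arrive at the same conclusion by the same mechanism.
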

\begin{proof} (Claim \ref{claim:primal_opt}): 
For each action vector $\bv{x}\in\bv{\script{X}}$, we define its ``utility-constraint'' vector $\bv{J}(\bv{x})$ as follows: 
\begin{eqnarray*}
\bv{J}(\bv{x})=(\script{F}(\bv{x}), \script{A}_1(\bv{x})-\script{B}_1(\bv{x}), ..., \script{A}_r(\bv{x})-\script{B}_r(\bv{x})). 
\end{eqnarray*}
Denote $\script{J}=\{\bv{J}(\bv{x}): \bv{x}\in\bv{\script{X}}\}$, i.e., $\script{J}$ is the set of all possible utility-constraint vectors for $\bv{\script{X}}$, and denote $\overline{\script{J}}$ the convex hull of $\script{J}$. Let $\Pi^*$ be an optimal action-choosing policy that solves the stochastic problem.  Now define the ``utility-constraint'' vector $\bv{J}$ for $\Pi^*$ as: 
\begin{eqnarray*}
\bv{J}^{\Pi^*}=(\overline{f^{\Pi^*}}, \overline{A^{\Pi^*}_1}-\overline{\mu^{\Pi^*}_1}, ..., \overline{A^{\Pi^*}_r}-\overline{\mu^{\Pi^*}_r}), 
\end{eqnarray*}
where $\overline{f^{\Pi^*}}=f^*_{av}$ is the time average cost under $\Pi^*$, and $\overline{A^{\Pi^*}_j}, \overline{\mu^{\Pi^*}_j}$ are the time average input and output rates to queue $j$ under $\Pi^*$. Note that here we have assumed without loss of generality that the time averages converge. \footnote{ In the case when this assumption is violated, the same argument can be applied to the limit points of the time averages but is more involved.}
It can then be shown by using an argument similar to that in \cite{neelyenergy} that the vector $\bv{J}^{\Pi^*}\in\overline{\script{J}}$. Using Caratheodory's theorem \cite{bertsekasoptbook}, we see then there exist $\{a^{(s_i)}_k\}_{i=1, 2, ...}^{k=1,..., r+2}$ with $a^{(s_i)}_k\geq0$, $\sum_ka^{(s_i)}_k=1$, and a set of action vectors $\{\bv{x}^k\}_{k=1}^{r+2}\subset\bv{\script{X}}$ such that:
\begin{eqnarray*}
\hspace{-.3in}&& \sum_{s_i}\pi_{s_i}\sum_{k=1}^{r+2}a^{(s_i)}_kf(s_i, x^{(s_i)}_k) 
 = \overline{f^{\Pi^*}},\\
\hspace{-.3in}&&\sum_{s_i}\pi_{s_i}\sum_{k=1}^{r+2}a^{(s_i)}_kA_j(s_i, x^{(s_i)}_k)-\sum_{s_i}\pi_{s_i}\sum_{k=1}^{r+2}a^{(s_i)}_k\mu_j(s_i, x^{(s_i)}_k)\\
 \hspace{-.3in}&&\qquad\qquad\qquad\qquad\qquad\qquad=\overline{A^{\Pi^*}_j}-\overline{\mu^{\Pi^*}_j}\leq 0 \quad\forall\,\, j. 
\end{eqnarray*}
The inequality $\overline{A^{\Pi^*}_j}-\overline{\mu^{\Pi^*}_j}\leq 0$ holds since $\Pi^*$ is by definition a stabilizing policy. This shows that $\{a^{(s_i)}_k\}_{i=1, 2, ...}^{k=1,..., r+2}, \{\bv{x}^k\}_{k=1}^{r+2}$ is a feasible solution of (\ref{eq:primal_convex}), implying $Vf_{av}^*\geq OPT_c$.

To prove the other direction, let $\{\overline{a}^{(s_i)}_k\}^{k=1, ..., r+2}_{i=1, 2, ...}$ and $\{\overline{\bv{x}}^{k}\}_{k=1}^{r+2}$ be an optimal solution pair of (\ref{eq:primal_convex}). Now by our slackness assumption (\ref{eq:slackness}), there exists a set of actions $\{x^{(s_i)}_k\}_{i=1, 2, ...}^{k=1, ..., r+2}$ and probabilities $\{\vartheta^{(s_i)}_k\}_{i=1, 2, ...}^{k=1, ..., r+2}$ with $\sum_k\vartheta^{(s_i)}_k=1$ such that $\sum_{s_i}\pi_{s_i}\big\{\sum_k\vartheta^{(s_i)}_k[A_{j}(s_i, x^{(s_i)}_k)-\mu_{j}(s_i, x^{(s_i)}_k)]\big\}\leq -\eta$ for some $\eta>0$ for all $j$. We can thus construct the following policy $\Pi'$: fix some $\hat{\epsilon}\in(0, 1)$, at every state $s_i$, choose action $\overline{x}^{(s_i)}_k$ with probability $(1-\hat{\epsilon})a^{(s_i)}_k$ and choose action $x^{(s_i)}_k$ with probability $\hat{\epsilon}\vartheta^{(s_i)}_k$. 
Since $|f(t)|\leq\delta_{max}$ for all $t$, it is easy to see then under $\Pi'$: 
\begin{eqnarray}
|\overline{f^{\Pi'}} - OPT_c/V|\leq\hat{\epsilon}\delta_{max}, \label{eq:OPTc-opt}
\end{eqnarray}
and that for each queue $j$, $\overline{A^{\Pi'}_j}-\overline{\mu^{\Pi'}_j}<-\hat{\epsilon}\eta<0$. This policy can be shown to ensure that the network is strongly stable. Hence $\Pi'$ is a feasible control policy. Therefore $\overline{f^{\Pi'}}\geq f_{av}^*$ by the definition of $f_{av}^*$. Using this fact together with (\ref{eq:OPTc-opt}), we have $OPT_c/V\geq f_{av}^*-\hat{\epsilon}\delta_{max}$. Since this holds for all $\hat{\epsilon}>0$, we have $OPT_c/V\geq f_{av}^*$.
\end{proof}
%% do I need to show that if avg mu>avg A, then the queue is stable? 
% One way would be to use the QLA argument. However, this would require that the QLA algorithm can achieve OPT-O(1/V). Which might not be true for the markovian case. Such a solution can be shown to exist as $\bv{\script{X}}$ is compact. 

%It remains to prove the following claim:
\begin{claim}\label{claim:nodualgap}
$OPT_c=g(\bv{\gamma}^*_V)$
\end{claim}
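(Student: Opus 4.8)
The plan is to prove Claim \ref{claim:nodualgap} in two stages: (i) show that the Lagrangian dual \emph{function} of the convexified problem (\ref{eq:primal_convex}) coincides, for every $\bv{\gamma}\succeq\bv{0}$, with the function $g(\bv{\gamma})$ from (\ref{eq:dual_separable}), so that the dual of (\ref{eq:primal_convex}) is identical to (\ref{eq:dualproblem}); and (ii) show that (\ref{eq:primal_convex}) has \emph{zero duality gap}, so that $OPT_c$ equals its optimal dual value, which by (i) is $g(\bv{\gamma}^*_V)$.

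For stage (i), I would price the queue constraints of (\ref{eq:primal_convex}) with a multiplier $\bv{\gamma}\succeq\bv{0}$ and set
\begin{eqnarray*}
g_c(\bv{\gamma}) &=& \inf_{\{a^{(s_i)}_k,\,\bv{x}^k\}} \sum_{s_i}\pi_{s_i}\sum_{k=1}^{r+2}a^{(s_i)}_k\Big\{Vf(s_i,x^{(s_i)}_k) \\
&& \qquad\qquad + \sum_j\gamma_j\big[A_j(s_i,x^{(s_i)}_k)-\mu_j(s_i,x^{(s_i)}_k)\big]\Big\},
\end{eqnarray*}
the infimum being over $\bv{x}^k\in\bv{\script{X}}$ and over the simplex constraints $a^{(s_i)}_k\ge0$, $\sum_k a^{(s_i)}_k=1$. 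The key observation is that this objective is \emph{separable across states} $s_i$ and, for any fixed collection of actions, is \emph{linear} in the weights $\{a^{(s_i)}_k\}_k$ over a probability simplex; hence the infimum over those weights is attained at a vertex, i.e., by concentrating all mass on whichever index $k$ minimizes the bracketed term. Minimizing that bracket over $x^{(s_i)}\in\script{X}^{(s_i)}$ is exactly $g_{s_i}(\bv{\gamma})$ of (\ref{eq:dual_separable_si}), and averaging against $\pi_{s_i}$ gives $g_c(\bv{\gamma})=\sum_{s_i}\pi_{s_i}g_{s_i}(\bv{\gamma})=g(\bv{\gamma})$ by (\ref{eq:dualfunction_sumofsi}). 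Therefore $\sup_{\bv{\gamma}\succeq\bv{0}}g_c(\bv{\gamma})=g(\bv{\gamma}^*_V)$, and it remains only to prove $OPT_c=\sup_{\bv{\gamma}\succeq\bv{0}}g_c(\bv{\gamma})$.

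For stage (ii), I would work with the achievable ``utility-constraint'' region. Weak duality, $g_c(\bv{\gamma})\le OPT_c$ for all $\bv{\gamma}\succeq\bv{0}$, is immediate. For the reverse, observe that the set of vectors $(\overline{\script{F}},\,\overline{\script{A}}_1-\overline{\script{B}}_1,\ldots,\overline{\script{A}}_r-\overline{\script{B}}_r)$ achievable by feasible points of (\ref{eq:primal_convex}) is precisely $\overline{\script{J}}$, the convex hull of the set $\script{J}$ from the proof of Claim \ref{claim:primal_opt}: the auxiliary variables $\{a^{(s_i)}_k\}$ realize a per-state convex combination of the vectors $\bv{J}(\cdot)$, and Caratheodory's theorem justifies using $r+2$ points for these $(r+1)$-dimensional vectors. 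Since $\overline{\script{J}}$ is convex and, by the slackness assumption (\ref{eq:slackness}), it contains a point all of whose last $r$ coordinates are $\le-\eta<0$ (a Slater point), a standard supporting-hyperplane argument at the optimal boundary point $(OPT_c,\bv{y})$, $\bv{y}\preceq\bv{0}$, yields a normal $(\lambda_0,\bv{\gamma})$ with $\lambda_0\ge0$ and $\bv{\gamma}\succeq\bv{0}$ (nonnegativity because the achievable region is upward-closed in these coordinates); the Slater point rules out $\lambda_0=0$, so we normalize $\lambda_0=1$, and then the resulting $\bv{\gamma}$ satisfies $g_c(\bv{\gamma})\ge OPT_c$. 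Hence $\sup_{\bv{\gamma}\succeq\bv{0}}g_c(\bv{\gamma})\ge OPT_c$, and combining with stage (i) gives $OPT_c=g(\bv{\gamma}^*_V)$, as claimed.

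The step I expect to be the main obstacle is making the convexity (and closedness) of the achievable region $\overline{\script{J}}$ fully rigorous when $\script{S}$ is \emph{countably infinite}: the region is a $\bv{\pi}$-weighted Minkowski combination of infinitely many per-state convex hulls, so one must either pass to closures carefully or truncate to finitely many states and take a limit, using compactness of the $\script{X}^{(s_i)}$ and the uniform bound $\delta_{max}$ to control the tail. Coupled with this is the need to guarantee that the separating hyperplane has a strictly positive objective coordinate $\lambda_0$ --- precisely the point at which the $\eta$-slackness of (\ref{eq:slackness}) and the bound $0\le f\le\delta_{max}$ enter, ruling out a degenerate ``vertical'' supporting hyperplane.
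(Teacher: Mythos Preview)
Your proposal is correct and follows essentially the same two-stage route as the paper: (i) identify $g_c(\bv{\gamma})$ with $g(\bv{\gamma})$ via separability across states and the simplex-vertex observation, and (ii) establish zero duality gap for (\ref{eq:primal_convex}) by a hyperplane argument grounded in convexity of the achievable region and the Slater point furnished by (\ref{eq:slackness}). The one cosmetic difference is that the paper works with the explicit upward closure $\script{M}\supset\overline{\script{J}}$ and separates it from the ray $\script{D}=\{(\nu,\bv{0}):\nu<OPT_c\}$, which directly forces nonnegativity of the normal $(\zeta,\bv{\gamma})$; in your supporting-hyperplane formulation you must pass to this upward closure (rather than $\overline{\script{J}}$ itself, which is \emph{not} upward-closed) to justify $\lambda_0\ge0$ and $\bv{\gamma}\succeq\bv{0}$, so that step should be made explicit.
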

\begin{proof}
We first look at the dual problem of (\ref{eq:primal_convex}):
\begin{eqnarray}
\max: \,\,\,g_c(\bv{\gamma}),\quad s.t. \,\,\, \bv{\gamma}\succeq\bv{0},\label{eq:dualproblem_convex}
\end{eqnarray}
where $g_c(\bv{\gamma})$ is defined:
\begin{eqnarray}
\hspace{-.3in}&&g_c(\bv{\gamma})=\inf_{x^{(s_i)k}\in \script{X}^{(s_i)}, a^{(s_i)}_k}\sum_{s_i}\pi_{s_i}\bigg\{ V\sum_{k=1}^{r+2}a^{(s_i)}_kf(s_i, x^{(s_i)}_k)\label{eq:dual_convex}\\
\hspace{-.3in}&&\quad+\sum_j\gamma_j\bigg[ \sum_{k=1}^{r+2}a^{(s_i)}_kA_j(s_i, x^{(s_i)}_k)- \sum_{k=1}^{r+2}a^{(s_i)}_k\mu_j(s_i, x^{(s_i)}_k)\bigg]\bigg\}.\nonumber
\end{eqnarray}
%Let $\gamma^*_c$ be an optimal solution of this dual problem. 
Now by comparing (\ref{eq:dual_convex}) and (\ref{eq:dual_separable}), we see that $g_c(\bv{\gamma})=g(\bv{\gamma})$ for all $\bv{\gamma}\succeq\bv{0}$. 
This is so because  at any $\bv{\gamma}\succeq\bv{0}$, we first have $g_c(\bv{\gamma})\leq g(\bv{\gamma})$. Now if $\{x^{(s_i)}\}_{i=1}^{\infty}$ are the minimizers of $g(\bv{\gamma})$, then $\{x^{(s_i)}_k\}_{i=1, 2, ...}^{k=1, ..., r+2}$, with $x^{(s_i)}_k=x^{(s_i)}$, $a^{(s_i)}_1=1$ and $a^{(s_i)}_k=0$ if $k\neq1$, will also be the minimizers of $g_c(\bv{\gamma})$. This shows $g_c(\bv{\gamma}) = g(\bv{\gamma})$, which then implies that $g^*_c=g(\bv{\gamma}^*_V)$, where $g^*_c$ is the optimal value of (\ref{eq:dualproblem_convex}). 

Now it remains to show that $g_c^*=OPT_c$. It suffices to show that $g_c^*\geq OPT_c$. We prove this claim by using a similar approach as that in Page 234 of  \cite{boydconvexopt}. Denote the set $\Gamma=\{\{a^{(s_i)}_k\}_{i=1, 2, ...}^{k=1, ..., r+2}: a^{(s_i)}_k\geq0, \sum_ka^{(s_i)}_k=1,\,\forall\, s_i\}$. Consider the set $\script{M}$ as follows:
\begin{eqnarray*}
\script{M}=\bigg\{(u, c_1 ..., c_r)\left.|\right. \exists\,\, \{a^{(s_i)}_k\} \in\Gamma, \{\bv{x}^{k}\}_{k=1}^{r+2 }\subset\bv{\script{X}}\,\, s.t.\, \qquad\\
\overline{\script{F}}(\{a^{(s_i)}_k, \bv{x}^k\})\leq u, \text{and} \qquad\qquad\qquad\qquad\quad\\
\overline{\script{A}}_j(\{a^{(s_i)}_k, \bv{x}^k\})-\overline{\script{B}}_j(\{a^{(s_i)}_k, \bv{x}^k\})\leq c_j, \,\,\forall j\bigg\}. 
\end{eqnarray*}
It is not difficult to show that $\overline{\script{J}}\subset\script{M}$. We now show that $\script{M}$ is convex. Indeed, if two vectors $(u, c_1 ..., c_r)$ and $(\hat{u}, \hat{c}_1 ..., \hat{c}_r)$ are both in $\script{M}$, then there exist $\{a^{(s_i)}_k\}_{i=1, 2, ...}^{k=1, ..., r+2}, \{\bv{x}^{k}\}_{k=1}^{r+2 }$ and $\{\hat{a}^{(s_i)}_k\}_{i=1, 2, ...}^{k=1, ..., r+2}, \{\hat{\bv{x}}^{k}\}_{k=1}^{r+2 }$ such that:
\begin{eqnarray*}
&&\overline{\script{F}}(\{a^{(s_i)}_k, \bv{x}^k\})\leq u,\quad\,\,\overline{\script{F}}(\{\hat{a}^{(s_i)}_k, \hat{\bv{x}}^k\})\leq \hat{u},\\
&&\overline{\script{A}}_j(\{a^{(s_i)}_k, \bv{x}^k\})- \overline{\script{B}}_j(\{a^{(s_i)}_k, \bv{x}^k\})\leq c_j, \,\,\forall j,\\
&&\overline{\script{A}}_j(\{\hat{a}^{(s_i)}_k, \hat{\bv{x}}^k\})- \overline{\script{B}}_j(\{\hat{a}^{(s_i)}_k, \hat{\bv{x}}^k\})\leq \hat{c}_j, \,\,\forall j.
\end{eqnarray*}
Now if we consider the vectors $\theta\cdot(u, c_1, ..., c_r)+(1-\theta)\cdot (\hat{u}, \hat{c}_1 ..., \hat{c}_r)$. Using Caratheodory's theorem again, we see that there exists $\{\tilde{a}^{(s_i)}_k\}^{k=1, ...,r+2}_{i=1, 2, ...}, \{\tilde{\bv{x}}^{k}\}_{k=1}^{r+2 }$ such that:
\begin{eqnarray*}
\hspace{-.3in}&&\overline{\script{F}}(\{\tilde{a}^{(s_i)}_k, \tilde{\bv{x}}^{k}\})=\theta\overline{\script{F}}(\{a^{(s_i)}_k, \bv{x}^k\}) + (1-\theta)\overline{\script{F}}(\{\hat{a}^{(s_i)}_k, \hat{\bv{x}}^k\}),  \\
\hspace{-.3in}&&\overline{\script{A}}_j(\{\tilde{a}^{(s_i)}_k, \tilde{\bv{x}}^k\})- \overline{\script{B}}_j(\{\tilde{a}^{(s_i)}_k, \tilde{\bv{x}}^k\})\\
\hspace{-.3in}&&\qquad\qquad\quad=\theta\bigg[\overline{\script{A}}_j(\{a^{(s_i)}_k, \bv{x}^k\})- \overline{\script{B}}_j(\{a^{(s_i)}_k, \bv{x}^k\})\bigg] \\
\hspace{-.3in}&&\qquad\qquad\qquad+(1-\theta)\bigg[\overline{\script{A}}_j(\{\hat{a}^{(s_i)}_k, \hat{\bv{x}}^k\})- \overline{\script{B}}_j(\{\hat{a}^{(s_i)}_k, \hat{\bv{x}}^k\})\bigg].
\end{eqnarray*}
This implies that $\theta\cdot(u, c_1, ..., c_r)+(1-\theta)\cdot (\hat{u}, \hat{c}_1 ..., \hat{c}_r)\in\script{M}$, hence $\script{M}$ is convex. 

We now define a second convex set $\script{D}$ as $\script{D}=\{(\nu, c_1, ...,c_r)\left.|\right. \nu<OPT_c, c_j=0, \,\forall\,j \}$. 
It is easy to see then $\script{M}\cap\script{D}$ is empty, for otherwise $OPT_c$ can not be the optimal value of (\ref{eq:dualproblem_convex}). Therefore there exists a hyperplane with norm $(\zeta, \gamma_1, ..., \gamma_r)\neq\bv{0}$ and some constant $c$ such that:
\begin{eqnarray}
(u, c_1, ..., c_r)\in\script{M} &\Rightarrow& u\zeta+\sum_{k=1}^r\gamma_jc_j\geq c, \nonumber\\
(\nu, c_1, ..., c_r)\in\script{D} &\Rightarrow& \nu\zeta+\sum_{k=1}^r\gamma_jc_j\leq c. \label{eq:epigrapheq}
\end{eqnarray}
We can thus conclude that $\zeta\geq0, \gamma_j\geq0$ and $\zeta \nu\leq c$ for all $\nu<OPT_c$, which implies $\zeta OPT_c\leq c$. Using these in (\ref{eq:epigrapheq}), and using the fact that $\overline{\script{J}}\subset\script{M}$, we see that for any $\{a^{(s_i)}_k\}_{i=1, 2, ...}^{k=1, ..., r+2}\in\Gamma, \{\bv{x}^{k}\}_{k=1}^{r+2 }\subset\bv{\script{X}}$, we have:
\begin{eqnarray}
\hspace{-.3in}&&\zeta OPT_c\leq c\leq\zeta\overline{\script{F}}(\{a^{(s_i)}_k, \bv{x}^k\})\label{eq:epicgraph_primaldual} \\ 
\hspace{-.3in}&&\qquad\qquad\qquad+\sum_{j=1}^{r}\gamma_j\bigg[\overline{\script{A}}_j(\{a^{(s_i)}_k, \bv{x}^k\})-\overline{\script{B}}_j(\{a^{(s_i)}_k, \bv{x}^k\})\bigg]. \nonumber
\end{eqnarray}
Clearly, $\zeta\neq0$, for otherwise we can plug in the actions $\{x^{(s_i)}_k\}_{i=1, 2, ...}^{k=1, ..., r+2}$ and probabilities $\{\vartheta^{(s_i)}_k\}_{i=1, 2, ...}^{k=1, ..., r+2}$ in the slackness assumption (\ref{eq:slackness}) to obtain:
\begin{eqnarray*}
0\geq\sum_{j=1}^r\gamma_j(-\eta)\geq0,
\end{eqnarray*}
which will imply $(\zeta, \gamma_1, ..., \gamma_r)=\bv{0}$. Thus we see that $\zeta>0$. Now dividing $\zeta$ from both sides of (\ref{eq:epicgraph_primaldual}), we have:
\begin{eqnarray*}
\overline{\script{F}}(\{a^{(s_i)}_k, \bv{x}^k\})+\sum_{j=1}^{r}\gamma'_j\bigg[\overline{\script{A}}_j(\{a^{(s_i)}_k, \bv{x}^k\})-\overline{\script{B}}_j(\{a^{(s_i)}_k, \bv{x}^k\})\bigg]\\
 \geq  OPT_c, %\label{eq:epicgraph_primaldual2}
\end{eqnarray*}
where $\gamma'_j=\gamma_j/\zeta$. This implies $g_c(\bv{\gamma}')\geq OPT_c$ with $\bv{\gamma}'=(\gamma'_1, ..., \gamma'_r)^T$. Hence $g_c^*\geq g_c(\bv{\gamma}')\geq OPT_c$, which by weak duality implies $g_c^*= OPT_c$, and so $g(\bv{\gamma}^*_V)=OPT_c$. 
\end{proof}
Combining Claim \ref{claim:primal_opt} and \ref{claim:nodualgap}, we see that Theorem \ref{thm:primaldualequal} follows.
\end{proof}

$\vspace{-.2in}$
\bibliographystyle{unsrt}
\bibliography{../mybib}

\end{document}